\newtheorem{theorem}{Theorem}[section]
\newtheorem{lemma}[theorem]{Lemma}
\newtheorem{corollary}[theorem]{Corollary}
\newtheorem{conjecture}[theorem]{Conjecture}
\theoremstyle{definition}
\newtheorem{definition}[theorem]{Definition}
\theoremstyle{remark}
\newtheorem{remark}[theorem]{Remark}
\numberwithin{equation}{section}
\date{}
\author{Jon Schneider}
\title{Enumeration and quasipolynomiality of chip-firing configurations}
\begin{document}
\maketitle

\begin{abstract}
In this paper we explore enumeration problems related to the number of reachable configurations in a chip-firing game on a finite connected graph $G$. We define an auxiliary notion of debt-reachability and prove that the number of debt-reachable configurations from an initial configuration with $c$ chips on one vertex is a quasipolynomial in $c$. For the cycle graph $C_n$, we apply these results to compute a near explicit formula for the number of debt-reachable configurations. We then derive polynomial asymptotic bounds for the number of debt-reachable and reachable configurations, and finally provide evidence for a quasipolynomiality conjecture regarding the number of reachable configurations.
\end{abstract}

\section{Introduction}

On a finite connected graph $G$, we place a nonnegative integer amount of chips at each of the vertices. A vertex can fire if it contains at least as many chips as its degree, in which case it sends one chip to each adjacent vertex. Such a move is called a chip-firing move. Chip-firing on graphs is known to have applications to numerous related areas of mathematics, such as tropical geometry \cite{Ha}, statistical physics \cite{Ma}, and combinatorics \cite{Ho}.

In this paper, we consider the notions of reachability and debt-reachability and related enumerative problems. A configuration of chips $C'$ is reachable from another configuration $C$ if there is a sequence of chip-firing moves that transforms configuration $C$ to configuration $C'$. If we ignore the restriction that a vertex must contain at least as many chips as its degree, we obtain the related concept of debt-reachability (see Section \ref{definitions} for a precise definition). 

For a fixed graph $G$ with $n$ vertices, in Theorem \ref{thm4} we show that the number of configurations debt-reachable from a configuration with $c$ chips on one vertex is a quasipolynomial in $c$ with degree $n-1$ and period dividing $\kappa(G)$. In Theorems \ref{thm7_5} and \ref{thm12}, we show that both the numbers of configurations debt-reachable and reachable from a configuration with $c$ chips grow asymptotically as 

$$\dfrac{c^{n-1}}{(n-1)!\kappa(G)}$$

Finally, based on empirical evidence we conjecture that the number of configurations reachable from a configuration with $c$ chips on one vertex is a quasipolynomial in $c$ for sufficiently large values of $c$ (Conjecture \ref{conj1}). 

In addition to this, we also prove various properties about the number of such configurations for the case where $G$ is a cycle graph $C_n$. For $n=3$, in Theorems \ref{thm1} and \ref{thm3} we compute an explicit expression for both the number of reachable configurations and debt-reachable configurations. For $n > 3$, in Theorem \ref{thm10_5} we prove that the number of configurations debt-reachable from the configuration with $c$ chips on one vertex is equal to

$$\dfrac{1}{n}\binom{c+n-1}{n-1} + f(c \bmod n)$$

where $f(c \bmod n)$ is a constant which depends only on the remainder of $c$ modulo $n$. 

The rest of this paper is organized as follows. In Section~\ref{definitions} we provide background and definitions. In Section \ref{sec3}, we examine chip-firing on $C_3$ and prove Theorems \ref{thm1} and \ref{thm3} mentioned above. In Section \ref{sec4}, we prove quasipolynomiality and derive an asymptotic bound to the number of debt-reachable configurations. In Section \ref{sec5}, we apply the results of Section \ref{sec4} to the class of cycle graphs $C_n$. In Section \ref{sec6}, we prove that the number of reachable configurations is asymptotically equal to the number of debt-reachable configurations. Finally, in Section \ref{sec7}, we state Conjecture \ref{conj1} and provide some progress towards a proof.

\section{Background and Definitions}\label{definitions}

Unless otherwise stated, when we refer to a graph $G$, $G$ is finite, undirected, and connected with $n$ vertices. We discount the possibility of loops or multiedges; $G$ will always be a simple graph. We will also implicitly assume the prior existence of some labelling of the vertices of $G$ from $1$ through $n$. In accordance with common notation, we will let $C_n$ refer to the cycle graph on $n$ vertices, $K_n$ refer to the complete graph on $n$ vertices, and $P_n$ refer to the path graph on $n$ vertices.

The \textit{degree} of a vertex in a graph is equal to the number of other vertices it shares an edge with. We will denote the degree of vertex $i$ by $d_i$. 

\begin{definition}
The \textit{adjacency matrix} $A$ (alternatively $A(G)$) of a graph $G$ is the $n$ by $n$ matrix which has a $1$ in the $i$th row and $j$th column if there is an edge connecting vertices $i$ and $j$ and a $0$ otherwise.
\end{definition}

\begin{definition}
The \textit{degree matrix} $D$ (alternatively $D(G)$) of a graph $G$ is the $n$ by $n$ diagonal matrix whose $i$th element along the main diagonal is equal to $d_i$.
\end{definition}

\begin{definition}
The \textit{Laplacian matrix} $L$ (alternatively $L(G)$) of a graph $G$ is the $n$ by $n$ matrix equal to $D - A$.
\end{definition}

It is easily shown that, if $G$ is connected, $L$ has rank $n-1$ with a zero eigenvalue of $(1,1,\dots,1)^t$. We will therefore often let $L'$ denote any of the $n-1$ by $n-1$ matrices formed by removing one row and column of $L$ (usually, we will assume without loss of generality that this is the last row and column). We call $L'$ the \textit{reduced Laplacian matrix} of $G$.

\begin{definition}
A \textit{spanning tree} of $G$ is an acyclic connected subgraph of $G$ with $n$ vertices.
\end{definition}

We will let $\kappa(G)$ count the number of spanning trees of $G$. A famous theorem of Kirchoff states that \cite{St}:

\begin{theorem}[Matrix-Tree Theorem]\label{mtthm}
For a graph $G$, 

$$\kappa(G) = |\det L' |$$
\end{theorem}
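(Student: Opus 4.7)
The plan is to use the classical argument via the signed incidence matrix and the Cauchy-Binet formula. First I would fix an arbitrary orientation of each edge of $G$, and define the signed incidence matrix $B$ of size $n \times m$ (where $m$ is the number of edges), whose entry $B_{i,e}$ is $+1$ if $i$ is the head of $e$, $-1$ if $i$ is the tail, and $0$ otherwise. A short direct computation shows that $BB^t = L$: the diagonal entries count incidences (giving $d_i$), while the off-diagonal entry $(i,j)$ picks up a $-1$ for each edge between $i$ and $j$.

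Next, let $B'$ denote the $(n-1) \times m$ matrix obtained by deleting the last row of $B$, so that $B'(B')^t = L'$. I would then invoke the Cauchy-Binet formula to write
$$\det L' = \det\bigl(B'(B')^t\bigr) = \sum_{S} \det(B'_S)^2,$$
where $S$ ranges over all $(n-1)$-element subsets of the edge set and $B'_S$ denotes the corresponding $(n-1) \times (n-1)$ submatrix of $B'$. Since every term on the right is a nonnegative square, this already gives $\det L' \geq 0$, and in particular $|\det L'| = \det L'$.

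The heart of the proof, and the main obstacle, is the combinatorial lemma that $\det(B'_S) \in \{-1, 0, +1\}$, with the nonzero case occurring precisely when the edges of $S$ form a spanning tree of $G$. The plan is to argue in two directions. If $S$ contains a cycle, then the corresponding columns of $B$ satisfy a $\pm 1$ linear dependence (alternating signs around the cycle) that survives deletion of the last row, so $\det(B'_S) = 0$. Conversely, if $S$ is a spanning tree, I would prove $\det(B'_S) = \pm 1$ by induction on $n$: any tree has a leaf $v \neq n$, whose row of $B'_S$ contains a single nonzero entry (the unique edge incident to $v$); expanding along this row reduces the problem to the incidence matrix of the tree obtained by removing $v$, with one fewer row and column. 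The induction collapses to the $1 \times 1$ case where the determinant is $\pm 1$.

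Putting these pieces together, each spanning tree contributes exactly $1$ to the Cauchy-Binet sum and every non-tree subset contributes $0$, yielding $\det L' = \kappa(G)$. The only delicate point is the orientation bookkeeping in the leaf-expansion step, but since we only need the determinant up to sign, this is absorbed without difficulty into the $|\cdot|$ on the right-hand side of the theorem.
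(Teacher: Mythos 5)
Your proposal is correct: it is the classical Cauchy--Binet proof of Kirchhoff's theorem, and all the essential ingredients are present ($BB^t = L$, deletion of the last row, the expansion $\det L' = \sum_S \det(B'_S)^2$, vanishing of $\det(B'_S)$ when $S$ contains a cycle, and the leaf-expansion induction giving $\det(B'_S) = \pm 1$ for spanning trees). Note that the paper itself does not prove this statement at all --- it is quoted as Kirchhoff's Matrix-Tree Theorem with a citation to Stanley --- so your argument supplies a proof where the paper only gives a reference, and it is exactly the standard one found there. The only point left implicit is that an $(n-1)$-element edge set which is not a spanning tree must contain a cycle (an acyclic set of $n-1$ edges on $n$ vertices is automatically a spanning tree), which is needed to conclude that all non-tree terms in the Cauchy--Binet sum vanish; this is routine and does not affect correctness.
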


We proceed to define the notion of chip firing on a graph.

\begin{definition}
For a graph $G$, we call an $n$-tuple of nonnegative integers $(c_1, c_2, c_3, ..., c_{n})$ a \textit{chip-firing configuration} on $G$ with $c_i$ \textit{chips} on vertex $i$. If we remove the nonnegativity constraint and allow the $c_i$ to be arbitrary integers, we call such an $n$-tuple a \textit{general chip-firing configuration}. If at least one of the $c_i < 0$, then we say the configuration is \textit{in debt}, or is a \textit{debt chip-firing configuration}.
\end{definition}

\begin{definition}
We say that vertex $i$ \textit{can fire} if $c_i \geq d_i$. In this case, vertex $i$ loses $d_i$ chips and every vertex adjacent to vertex $i$ gains $1$ chip. We call this operation a \textit{legal chip-firing move}. If we remove the condition that we must have $c_i \geq d_i$, then we call the resulting operation a \textit{general chip-firing move}, or just a \textit{chip-firing move}. For the specific case where $c_i < d_i$, we will call the operation a \textit{debt chip-firing move}. A debt chip-firing move always results in a debt configuration.
\end{definition}

Note that the total number of chips on a graph stays constant under a chip-firing move. Moreover, as an operator on the space of chip-firing configurations of a graph $G$, firing vertex $i$ acts by translating the current configuration by the $i$th row of $L(G)$. That is, firing vertex $i$ sends configuration $C$ to the configuration $C + Le_i$, where $e_i$ is the $i$th unit vector. A corollary of this is that chip-firing moves are abelian; firing vertex $i$ and then firing vertex $j$ will result in the same configuration as firing vertex $j$ and then firing vertex $i$. Similarly, we also have the following consequence:

\begin{theorem}\label{triv3}
Firing each of the vertices once leaves the configuration invariant.
\end{theorem}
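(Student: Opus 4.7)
The plan is to reduce the statement to the fact, already noted in the excerpt, that $(1,1,\dots,1)^t$ lies in the kernel of the Laplacian $L(G)$. The excerpt observes that firing vertex $i$ acts on a (general) configuration $C$ by the translation $C \mapsto C + L e_i$, where $e_i$ is the $i$th standard basis vector. Since general chip-firing moves commute (they are all translations on $\mathbb{Z}^n$), the order in which we perform the $n$ firings is irrelevant, and the net effect of firing each vertex exactly once is the translation
\[
C \;\longmapsto\; C + \sum_{i=1}^{n} L e_i \;=\; C + L\!\left(\sum_{i=1}^{n} e_i\right) \;=\; C + L\mathbf{1},
\]
where $\mathbf{1} = (1,1,\dots,1)^t$.

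Next I would invoke the fact that $L\mathbf{1} = 0$, which is immediate from the definition $L = D - A$: the $i$th entry of $L\mathbf{1}$ equals $d_i$ minus the number of neighbors of vertex $i$, which is $d_i - d_i = 0$. (The excerpt already records this as the zero eigenvector of $L$.) Therefore the translation above is by the zero vector, and the configuration is unchanged.

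There is essentially no obstacle here; the statement is a one-line corollary of the Laplacian interpretation of chip-firing together with the row-sum-zero property of $L$. The only subtlety worth flagging is that we are implicitly working with \emph{general} chip-firing moves (since some of the intermediate firings may drive vertices into debt before the process is complete), which is why commutativity of the $n$ translations applies without any legality concerns.
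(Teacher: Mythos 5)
Your proof is correct and follows the same route as the paper: the paper's proof is a one-liner invoking exactly the fact that $(1,1,\dots,1)^t$ is in the kernel of $L$, combined with the translation interpretation of firing already noted in the text. Your version simply spells out the intermediate step $C \mapsto C + L\mathbf{1}$ and the legality caveat, which the paper leaves implicit.
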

\begin{proof}
This follows from the fact that $(1,1,\dots,1)^t$ is a zero eigenvalue of the Laplacian, as noted above.
\end{proof}

\begin{definition}\label{defreachable}
We say that a chip configuration $C'$ is \textit{reachable} from a chip configuration $C$ if there exist a sequence of legal chip-firing moves which transform $C$ into $C'$. We say that a chip configuration $C'$ is \textit{debt-reachable} from a chip configuration $C$ if there exists a sequence of chip-firing moves (possibly debt chip-firing moves) which transforms $C$ into $C'$. Note that both $C$ and $C'$ must be non-debt chip configurations.
\end{definition}

As a direct consequence of Definition \ref{defreachable} and the preceding observation, we have the following useful check for debt-reachability:

\begin{theorem}\label{triv2}
A configuration $C$ is debt-reachable from the configuration $C'$ iff there exists a solution $x \in \mathbb{Z}^n$ to $Lx = C - C'$. 
\end{theorem}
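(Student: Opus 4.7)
The plan is to argue that the effect of any sequence of (possibly debt) chip-firing moves on a configuration is encoded by a firing vector $x \in \mathbb{Z}_{\geq 0}^n$ via the Laplacian, and then use the kernel of $L$ to pass freely between $\mathbb{Z}^n$ and $\mathbb{Z}_{\geq 0}^n$ solutions.

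For the forward direction, suppose $C$ is debt-reachable from $C'$. By the abelian property of chip-firing noted in the excerpt, a sequence of moves is determined up to reordering by the tuple $x = (x_1,\dots,x_n) \in \mathbb{Z}_{\geq 0}^n$ where $x_i$ counts how many times vertex $i$ is fired. Since firing vertex $i$ translates the configuration by $Le_i$, the net effect is $C - C' = \sum_i x_i Le_i = Lx$, so $x$ is (in particular) an integer solution to $Lx = C - C'$.

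For the backward direction, suppose $x \in \mathbb{Z}^n$ satisfies $Lx = C - C'$. The coordinates of $x$ need not be nonnegative, but by Theorem \ref{triv3} (equivalently, since $(1,1,\dots,1)^t \in \ker L$), the vector $x' := x + k(1,1,\dots,1)^t$ also satisfies $Lx' = C - C'$ for every integer $k$. Choosing $k$ sufficiently large (e.g.\ $k = \max(0, -\min_i x_i)$) makes $x'$ componentwise nonnegative. Then, firing each vertex $i$ a total of $x'_i$ times, in any order, is a valid sequence of (possibly debt) chip-firing moves whose net effect on $C'$ is $Lx' = C - C'$, producing $C$. Since both $C$ and $C'$ are assumed non-debt, this is a legitimate witness to debt-reachability.

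The only real subtlety is the sign issue addressed in the last step; it is resolved cleanly by the kernel of $L$ containing $(1,1,\dots,1)^t$, which is why Theorem \ref{triv3} is the essential ingredient rather than a triviality. Everything else follows directly from the abelian property and the identification of firing with translation by rows of $L$.
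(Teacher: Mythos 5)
Your proof is correct and follows the same route the paper has in mind: the paper states this result without proof as a direct consequence of the definition of debt-reachability and the observation that firing vertex $i$ translates a configuration by $Le_i$, which is exactly your forward direction. Your handling of the backward direction's sign issue, by shifting the integer solution by a suitable multiple of $(1,1,\dots,1)^t \in \ker L$ to obtain a nonnegative firing vector (legitimate since debt moves are permitted), is the right way to fill in the detail the paper leaves implicit.
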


Theorem \ref{triv2} allows us to define the following:

\begin{definition}
A vector $x \in \mathbb{Z}^n$ is a \textit{firing vector} for the transition from $C'$ to $C$ if $Lx = C - C'$. If $x = (x_1, x_2, \dots, x_n)$, then $x_i$ corresponds to vertex $i$ being fired $x_i$ times. Since the only zero eigenvector of $L$ is $(1,1,\dots,1)^t$, if $Lx = C - C'$ has a solution, there is a unique solution such that $\min(x_i) = 0$. We call this unique solution the \textit{reduced firing vector} for the transition from $C'$ to $C$.
\end{definition}

Finally, we define some terminology related to the theory of Ehrhart polynomials. Let an integral polytope be a polytope whose vertices have integer coordinates.

\begin{definition}\label{ehrpoly}
The \textit{Ehrhart polynomial} of an integral polytope $P$ in $\mathbb{R}^n$ is defined to be:

$$i(P, q) = \# (qP \cap \mathbb{Z}^n)$$

That is, $i(P,q)$ is equal to the number of lattice points inside the $q$-fold expansion of the polytope $P$. It is a theorem of Ehrhart \cite{Be} that $i(P,q)$ is indeed a polynomial of degree $n$ in $q$ for integer values of $q$.
\end{definition}

Likewise, let a rational polytope be a polytope whose vertices have rational coordinates. 

\begin{definition}
A \textit{quasipolynomial} $q(k)$ is a function from $\mathbb{Z}$ to $\mathbb{Z}$ of the form:

$$q(k) = c_d(k)k^d + c_{d-1}(k)k^{d-1}+\dots+c_0(k)$$

where the $c_i(k)$ are periodic functions of $k$ with integer period. We call the period of $q$ the lowest common multiple of the periods of the $c_i(k)$. 
\end{definition}

Then we also have the following definition.

\begin{definition}
The \textit{Ehrhart quasipolynomial} of an rational polytope $P$ in $\mathbb{R}^n$ is defined to be:

$$i(P, q) = \# (qP \cap \mathbb{Z}^n)$$

As in Definition \ref{ehrpoly} it is a theorem of Ehrhart \cite{St} that $i(P,q)$ is indeed a quasipolynomial of degree $n$ in $q$ for integer values of $q$. Moreover, if $kP$ is an integer polytope, then the period of $i(P,q)$ divides $k$.
\end{definition}

\section{Enumeration of reachable configurations on $C_3$}\label{sec3}

In this section, we explicitly compute the number of debt-reachable and reachable configurations from the configuration $(c,0,0)$ on the triangle graph $C_3$. The techniques we use to do this are also illustrative of the techniques we will apply later in the paper to more general graphs.

\begin{theorem}\label{thm1}
The number of configurations that are debt-reachable from $(c,0,0)$ is equal to:

$$\dfrac{3k^2 + 3k + 2}{2}$$

if $c = 3k$,

$$\dfrac{3k^2 + 5k + 2}{2}$$

if $c = 3k+1$, and

$$\dfrac{3k^2 + 7k + 4}{2}$$

if $c = 3k+2$

\end{theorem}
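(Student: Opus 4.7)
The plan is to translate debt-reachability into a congruence condition via Theorem \ref{triv2} and then count lattice points in a triangle separated by residue classes modulo $3$.

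First, since any chip-firing move preserves the total number of chips, every configuration debt-reachable from $(c,0,0)$ has the form $(a,b,d)$ with $a,b,d \geq 0$ and $a+b+d = c$. By Theorem \ref{triv2}, such a triple is debt-reachable from $(c,0,0)$ precisely when $(a-c, b, d)$ lies in $L(C_3)\mathbb{Z}^3$. Since $\kappa(C_3) = 3$ by the Matrix-Tree theorem, the image $L(C_3)\mathbb{Z}^3$ has index $3$ in the sublattice $\{(x,y,z) \in \mathbb{Z}^3 : x+y+z = 0\}$, so I would exhibit an explicit surjection $\phi \colon \mathbb{Z}^3 \to \mathbb{Z}/3\mathbb{Z}$ with $L(C_3) \mathbb{Z}^3 \subseteq \ker \phi$, for instance $\phi(x,y,z) = x - y \pmod{3}$, and verify by direct computation that $\phi$ vanishes on each of the three rows of $L(C_3)$. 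The debt-reachability condition then becomes $a+b+d = c$, $a,b,d \geq 0$, and $a - b \equiv c \pmod{3}$.

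Next, I would count these triples directly. Eliminating $d = c - a - b$, the problem reduces to counting nonnegative integer pairs $(a,b)$ with $a + b \leq c$ and $a - b \equiv c \pmod{3}$. For fixed $a \in \{0, 1, \dots, c\}$, the number of admissible $b \in \{0,1,\dots,c-a\}$ depends only on $a \bmod 3$ together with $\lfloor (c-a)/3 \rfloor$, so summing by residue class modulo $3$ and splitting into the three cases $c = 3k$, $c = 3k+1$, $c = 3k+2$ produces, in each case, a short linear combination of triangular numbers of the form $\binom{k}{2}$, $\binom{k+1}{2}$, $\binom{k+2}{2}$. Simplifying each such combination should yield the three quadratics $(3k^2+3k+2)/2$, $(3k^2+5k+2)/2$, and $(3k^2+7k+4)/2$ of the statement.

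The principal obstacle is bookkeeping: the floor-function corrections and the boundary values of $a$ near $0$ and near $c$ in each residue case are easy to mishandle, so the main risk is off-by-one errors across the three parallel case analyses. As a safeguard I would sanity-check the final formulas against direct enumeration for small $c$ (for instance $c = 3, 4, 5$, which should give $4, 5, 7$ respectively). A slicker alternative would be to apply a roots-of-unity filter to the generating function $\frac{1}{(1-z)(1-\omega z)(1-\omega^{-1}z)}$ with $\omega$ a primitive cube root of unity, but for a graph this small the direct case analysis is the most transparent route.
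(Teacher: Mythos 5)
Your proposal is correct but takes a genuinely different route from the paper. The paper counts in firing-vector space: it fixes $x_1 = 0$, notes that nonnegativity of $C + Lx$ cuts out a rational triangle in the $(x_2,x_3)$-plane whose vertices scale linearly with $c$, and counts lattice points via Pick's theorem when $3 \mid c$, then adds the points on the new bounding line in the cases $c = 3k+1$ and $c = 3k+2$. You instead work in configuration space: you identify $L(C_3)\mathbb{Z}^3$ inside the zero-sum lattice as the kernel of the character $(x,y,z)\mapsto x-y \bmod 3$ (containment by checking the three rows of $L$, equality because both sublattices have index $3 = \kappa(C_3)$), so the debt-reachable configurations are exactly the nonnegative triples $(a,b,d)$ with $a+b+d=c$ and $a-b\equiv c \pmod 3$, and you then count these by residue classes. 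Your congruence test is right --- it reproduces the counts $1,2,4,5$ for $c=1,2,3,4$, and your sanity values $4,5,7$ for $c=3,4,5$ agree with the stated formulas --- and the triangular-number bookkeeping you defer is routine, so the plan goes through. As for what each approach buys: the paper's Pick/Ehrhart viewpoint needs no explicit description of the lattice $L\mathbb{Z}^n$ and is exactly the argument that scales up to Theorem \ref{thm4} for arbitrary graphs, which is why the paper presents it as a warm-up; your viewpoint exploits the fact that the sandpile group of $C_3$ is $\mathbb{Z}/3\mathbb{Z}$ to get a one-line membership test and is closer in spirit to Section \ref{sec5}, where configurations are counted per debt-reachability block --- indeed your count is the size of the block of $(c,0,0)$, recovering $\tfrac13\binom{c+2}{2}$ when $\gcd(c,3)=1$ as in Theorem \ref{thm10}, while your explicit congruence also handles $3\mid c$ directly rather than only through the symmetry argument.
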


\begin{proof}
We wish to compute how many configurations are debt-reachable from $(c,0,0)$. Since we are only concerned with debt-reachability, we do not care about the order of the firings, so we can just assume that the firing vector for this transition is equal to $(x_1, x_2, x_3)$, for $x_1, x_2, x_3 \in \mathbb{Z}$. After such an operation, we are in the configuration given by:

$$(c-2x_1 + x_2 + x_3, x_1 - 2x_2 + x_3, x_2 - 2x_3 + x_1)$$

In order for this configuration to be valid, all components must be greater than or equal to $0$. This gives us the following set of inequalities.

\begin{eqnarray*}
-2x_1 + x_2 + x_3 + c &\geq & 0 \\
x_1 -2x_2 + x_3 &\geq & 0\\
x_2 - 2x_3 + x_1 &\geq & 0
\end{eqnarray*}

Now, since firing vectors are equivalent modulo translation by $(1,1,1)^t$ (from Theorem \ref{triv3}), set $x_1 = 0$. This gives us the following set of inequalities in two variables:

\begin{eqnarray*}
x_2 + x_3 &\geq & -c \\
x_3 - 2x_2 &\geq & 0\\
x_2 - 2x_3 &\geq & 0
\end{eqnarray*}

These inequalities define a triangular region of the $x_2-x_3$ plane with vertices at $(0,0)$, $(-\frac{2c}{3}, -\frac{c}{3})$, and $(-\frac{c}{3}, -\frac{2c}{3})$. We wish to count the number of lattice points within this rational triangular region. The theory of Ehrhart polynomials tells us this has to be a quasi-polynomial of degree 2. To actually compute it, first note that if $n$ is divisible by $3$, we can use Pick's formula. If $c = 3k$, then the vertices of this triangle are $(-2k, -k)$, $(-k, -2k)$, and $(0,0)$. A simple count shows us that there are $3k$ lattice points on the boundary of this triangle. Moreover, the area of this triangle is equal to $\frac{3k^2}{2}$. Since Pick's formula states that:

$$A = I + \frac{B}{2} - 1$$

where $A$ is the area of the region, $I$ is the number of lattice points in the interior, and $B$ is the number of lattice points on the boundary. Applying this to our triangle, we have that:

$$\dfrac{3k^2}{2} = I + \dfrac{3k}{2} - 1$$

from which we have

$$I = \dfrac{3k^2 -3k +2}{2}$$

The total number of lattice points in the triangle is thus:

$$I + B = \dfrac{3k^2 + 3k + 2}{2}$$

Now, if $c = 3k+1$, let us count how many lattice points are added to the triangular region (i.e., that lie in the triangular region for $n=3k+1$ but don't for $c=3k$). These are simply the lattice points which lie on the line $x_2 + x_3 = -3k-1$ for $-\frac{c}{3} \geq x_2 \geq -\frac{2c}{3}$. Since $x_2$ must be an integer, this implies that $-(k+1) \geq x_2 \geq -2k$. Since for each such $x_2$ we can find a unique integer value of $x_3$, we therefore have that for the case where $c=3k+1$, the total number of lattice points equals:

$$\dfrac{3k^2 + 5k + 2}{2}$$

Likewise, when $c=3k+2$, the number of additional points is simply the number of integral $x_2$ that satisfy $-\frac{c}{3} \geq x_2 \geq -\frac{2c}{3}$, or equivalently, $-(k+1) \geq x_2 \geq -(2k+1)$. Hence, there are $k+1$ additional points, so the total number of lattice points for $c=3k+2$ is:

$$\dfrac{3k^2 + 7k + 4}{2}$$

Each of these corresponds to a unique debt-reachable configuration, so we are done.
\end{proof}

To compute the number of reachable configurations, we need the following result:

\begin{theorem}\label{thm2}
If a chip configuration $(c_1,c_2,c_3)$ of $C_3$ is debt-reachable from a configuration but not reachable from the same configuration, then $(c_1, c_2, c_3)$ is a permutation of $(c,0,0)$ (where $c = c_1 + c_2 + c_3$).
\end{theorem}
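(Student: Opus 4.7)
The plan is to prove the contrapositive: if $C = (c_1, c_2, c_3)$ has at least two positive coordinates, then for every $C_0$ from which $C$ is debt-reachable, $C$ is reachable from $C_0$. I will use the fact that a legal firing sequence from $C_0$ to $C$ exists iff a legal \emph{un-firing} sequence from $C$ to $C_0$ exists, where un-firing vertex $k$ is the inverse of firing (it sends one chip from each neighbor of $k$ into $k$, and is legal precisely when every neighbor of $k$ has at least one chip).

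Let $v \geq 0$ with $\min(v_i) = 0$ be the reduced un-firing vector from $C$ to $C_0$, so $C_0 = C + Lv$; I will induct on $|v| = v_1 + v_2 + v_3$, with the base case $|v| = 0$ trivial. At each step I look for a vertex $k$ with $v_k > 0$ such that un-firing $k$ at $C$ is legal and the result $C'' = C + Le_k$ either equals $C_0$ or still has at least two positive coordinates, so that the inductive hypothesis can be applied to $(C'', C_0)$ with the strictly smaller vector $v - e_k$. The crucial observation is that if $c_k = 0$, then the non-negativity of $(C_0)_k = 2v_k - v_i - v_j$ combined with $v \neq 0$ forces $v_k > 0$. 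Hence when $C$ has a unique zero coordinate at $k$, I am forced to un-fire $k$; legality is automatic since its two neighbors are the positive coordinates of $C$, and the resulting configuration has $\geq 2$ positive coordinates unless $c_i = c_j = 1$. When $C$ has all three coordinates positive, a similar short check produces a safe $k$ with $v_k > 0$ unless $C$ has at least two coordinates equal to $1$.

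The exceptional configurations for which the inductive step is not automatic are thus $C = (1,1,0)$ and $C = (1,1,1)$ (up to permutation), and I expect these to be the main obstacle. For each, I will enumerate the debt-reachability class directly, using Theorem \ref{triv2} together with the fact that $L(\mathbb{Z}^3)$ has index $\kappa(C_3) = 3$ in the zero-sum sublattice of $\mathbb{Z}^3$: the classes work out to $\{(1,1,0), (0,0,2)\}$ and $\{(1,1,1), (3,0,0), (0,3,0), (0,0,3)\}$ respectively. In either case $|v| \leq 1$, so a single legal un-firing suffices to reach $C_0$, and the induction closes.
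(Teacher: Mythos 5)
Your route is genuinely different from the paper's: the paper works forward from the source configuration, inducting on the reduced firing vector and arguing that some vertex with at least $2$ chips and a positive firing-vector entry can always be fired unless the target is a permutation of $(c,0,0)$; you instead pass to the contrapositive and run the game backwards from the target via un-firings, maintaining the invariant that the current configuration has at least two positive coordinates. The foundation is sound: the firing/un-firing duality you invoke is correct (reversing a legal firing sequence gives a legal un-firing sequence and vice versa, since the fired vertex ends with at least $d_k$ chips exactly when, before un-firing, each of its neighbors had at least one chip), and the ``crucial observation'' ($c_k=0$ and $v\neq 0$ force $v_k>0$, and un-firing the positive vertices is illegal because $k$ is a neighbor with no chips) is right.

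However, there is a concrete hole in the inductive step. By your own case check, a ``safe'' $k$ is not guaranteed whenever $C$ has all coordinates positive and at least two of them equal to $1$; this includes the infinite family $C=(1,1,m)$ with $m\geq 2$ (up to permutation), yet your fallback enumeration covers only $(1,1,0)$ and $(1,1,1)$ -- the word ``thus'' does not follow. These configurations cannot be dismissed by the ``$|v|\leq 1$'' argument: for $C=(1,1,m)$ the reduced un-firing vector can be as large as $v=(a,a,0)$ with $2a\leq m$, so $|v|$ is unbounded in $m$, and a genuine inductive step is needed. Moreover, such configurations can also arise as intermediate configurations $C''$ during the induction (they satisfy your invariant), so they must be handled inside the inductive step, not only as initial cases. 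The patch is short and in the spirit of your argument: if $v_i>0$ for one of the two vertices carrying a single chip, un-fire that vertex -- this is legal (its neighbors hold $1$ and $m\geq 1$ chips) and yields $(3,0,m-1)$, which still has two positive coordinates since $m\geq 2$; otherwise $v$ is supported on the heavy vertex $k$, and nonnegativity of $(C_0)_i=1-v_k$ at a light vertex forces $v_k=1$, so a single legal un-firing lands exactly on $C_0$. With this case added (and the trivial remark that $v-e_k$ may need to be re-reduced by subtracting a multiple of $(1,1,1)$, which only decreases its sum), your induction closes and gives a valid alternative proof.
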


\begin{proof}

It is clear that the configurations $(c,0,0)$ (and its cyclic permutations) are not reachable from any other configuration, because there is no possible prior configuration (any such prior configuration would either have the form $(c+2,-1,-1)$ or $(c-1, 2, -1)$). Assume that we are trying to reach some configuration $C'$ that is debt-reachable from configuration $C$. Let $x = (x_1, x_2, x_3)$ be the reduced firing vector of the transition from $C$ to $C'$. If $x_1 = x_2 = x_3 = 0$, $C$ and $C'$ are the same configuration, and we are done. Hence assume that at least one of the $x_i$ is greater than $0$, or alternatively that $x_1 + x_2 + x_3 > 0$. We will induct on the value of $x_1 + x_2 + x_3$; that is, we will show that unless $C' = (c,0,0)$ (or a cyclic permutation), we can fire a vertex of $C$ and get to a configuration where the components of the reduced firing vector sum to $x_1 + x_2 + x_3 - 1$.

As long as $c>3$ (and for $c=1,2,3$ we can check this lemma easily by hand), there will be one vertex with at least $2$ chips on it. Without loss of generality, let this vertex be vertex $1$. If $x_1 > 0$, fire vertex $1$, so we arrive at a new configuration $C''$ where we have to fire vertex $1$ $x_1 - 1$ times, vertex $2$ $x_2$ times, and vertex $3$ $x_3$ times, and we have reduced this to a smaller case. Otherwise, we must have $x_1 = 0$. Similarly, if either vertex $2$ or $3$ has at least $2$ chips on it and if either $x_2$ or $x_3$ is greater than $0$, fire the corresponding vertex and reduce the sum $x_1 + x_2 + x_3$ by one, as before. 

Furthermore, if $x_2 = 0$, then only $x_3$ is greater than $0$. We then claim that $c_3$ must have at least $2$ chips on it, because otherwise the final configuration would be $(c_1 + x_3, c_2 + x_3, c_3 - 2x_3)$, which is not valid if $c_3 < 2x_3$. Hence both $c_3\geq 2$ and $x_3 > 0$, so we can fire vertex 3 and decrease $x_1 + x_2 + x_3$ by one. Likewise, by symmetry we cannot have $x_3 = 0$ and $x_2 > 0$, so both $x_2$ and $x_3$ must be greater than $0$, and consequently, both $c_2$ and $c_3$ must be less than $2$.

There are then at most four possibilities we need to consider; $(c_2, c_3) = (0,0)$, $(0,1)$, $(1,0)$, and $(1,1)$, or equivalently, $(c_1,c_2,c_3) = (c,0,0)$, $(c-1,0,1)$, $(c-1,1,0)$, and $(c-2,1,1)$. We can check that the only case where there are values of $x_2$ and $x_3$ larger than $0$ which result in a legal chip-firing configuration is the case $(c-2,1,1)$, where if we fire both vertices 2 and 3, we end up at $(c,0,0)$. This means, however, that $C' = (c,0,0)$, and hence we are done.
\end{proof}

This immediately leads to the following theorem.

\begin{theorem}\label{thm3}
The number of reachable configurations from the inital configuration $(c,0,0)$ on $C_3$ is equal to the number of debt-reachable configurations when $c$ is not divisible by $3$, and is $2$ less than the number of debt-reachable configurations when $c$ is divisible by $3$. Explicitly, the number of reachable configurations equals:

$$\dfrac{3k^2 + 3k - 2}{2} \mbox{ if } c = 3k$$

$$\dfrac{3k^2 + 5k + 2}{2} \mbox{ if } c = 3k +1$$

$$\dfrac{3k^2 + 7k + 4}{2} \mbox{ if } c = 3k + 2$$
\end{theorem}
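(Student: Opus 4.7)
The strategy is to combine Theorem~\ref{thm1}, which counts debt-reachable configurations from $(c,0,0)$, with Theorem~\ref{thm2}, which pins down exactly which debt-reachable configurations can fail to be reachable. By Theorem~\ref{thm2}, any configuration debt-reachable but not reachable from $(c,0,0)$ must be a cyclic permutation of $(c,0,0)$, i.e.\ one of $(c,0,0)$, $(0,c,0)$, or $(0,0,c)$. Thus the discrepancy between the two counts is determined by asking, for each of these three configurations, whether it is debt-reachable from $(c,0,0)$ and, if so, whether it fails to be reachable.

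The first configuration, $(c,0,0)$, is trivially reachable from itself via the empty sequence of firings, so it contributes nothing. For the other two, I would first apply the criterion of Theorem~\ref{triv2}: $(0,c,0)$ is debt-reachable from $(c,0,0)$ iff $Lx = (-c,c,0)^t$ has an integer solution. Normalizing $x_1=0$ by Theorem~\ref{triv3} reduces this to a small linear system whose unique solution is $(0,\,2c/3,\,c/3)$, so integrality forces $3 \mid c$. By the symmetry swapping vertices $2$ and $3$, the same condition governs debt-reachability of $(0,0,c)$. Next, I would argue that when $3 \mid c$ and $c > 0$, neither $(0,c,0)$ nor $(0,0,c)$ is reachable from $(c,0,0)$: this is the observation made in the first sentence of the proof of Theorem~\ref{thm2} that any configuration with all its chips on a single vertex has no legal predecessor (the two candidate predecessors have a negative coordinate), so such a configuration is reachable only from itself, and neither $(0,c,0)$ nor $(0,0,c)$ equals $(c,0,0)$ for $c > 0$.

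Assembling these observations, the reachable count equals the debt-reachable count when $3 \nmid c$, and is smaller by exactly $2$ when $3 \mid c$. Substituting the three expressions from Theorem~\ref{thm1} then yields the three formulas in the statement. I do not expect any real obstacle here: the only calculation of substance is the short linear-algebra check pinning down $3 \mid c$ as the exact integrality condition for the relevant firing vectors, and everything else is immediate from the cited theorems.
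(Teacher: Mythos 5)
Your proposal is correct and follows essentially the same route as the paper: the paper's proof likewise invokes Theorem \ref{thm2} to reduce the discrepancy to the cyclic permutations of $(c,0,0)$, checks via Theorem \ref{triv2} that $(0,c,0)$ and $(0,0,c)$ are debt-reachable from $(c,0,0)$ exactly when $3 \mid c$, and then subtracts $2$ from the counts of Theorem \ref{thm1}. Your write-up just spells out a couple of steps (non-reachability of the two exceptional configurations, and that $(c,0,0)$ itself contributes nothing) that the paper leaves implicit.
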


\begin{proof}
We can check (via Theorem \ref{triv2}) that $(0,c,0)$ (and similarly, $(0,0,c)$) are only debt-reachable from $(c,0,0)$ when $3|c$. Therefore the number of reachable configurations is just the same as the number of debt-reachable configurations when $c$ is not divisible by $3$, and is $2$ less than the number of debt-reachable configurations when $c$ is divisible by $3$.
\end{proof}

\section{Debt-reachable configurations on arbitrary graphs}\label{sec4}

In this section, we derive asymptotics and properties of the number of configurations debt-reachable from the configuration $(c,0,0,\dots,0)$ on an arbitrary graph $G$.

\begin{theorem}\label{thm4}
The number of configurations debt-reachable from $(c,0,0,\dots,0)$ on an arbitrary graph $G$ is a quasipolynomial in $c$ of degree $n-1$ with period dividing $\kappa(G)$.
\end{theorem}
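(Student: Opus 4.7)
The plan is to reformulate the count as the number of lattice points in the $c$-fold dilate of a fixed rational polytope and then invoke Ehrhart's theorem, much as in the proof of Theorem \ref{thm1}.

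By Theorem \ref{triv2}, debt-reachable configurations from $(c,0,\dots,0)$ are in bijection with firing vectors $x \in \mathbb{Z}^n$ modulo $\mathbb{Z}(1,\dots,1)^t$ satisfying $(c,0,\dots,0) + Lx \geq 0$. I would choose the representative with $x_n = 0$ and set $x' = (x_1,\dots,x_{n-1})$, so that the nonnegativity constraints cut out a region $R_c \subset \mathbb{R}^{n-1}$. Three quick observations make this region usable: (i) $R_c$ is bounded, since any recession direction $v'$ would satisfy $L(v_1,\dots,v_{n-1},0)^t \geq 0$ and the zero-row-sum property of $L$ then forces equality, placing $(v_1,\dots,v_{n-1},0)^t$ in $\ker L$ and hence $v' = 0$; (ii) $R_c = c\,R_1$ by linearity; and (iii) $R_1$ has dimension $n-1$, since the map $x' \mapsto L(x_1,\dots,x_{n-1},0)^t$ surjects onto the hyperplane $\{v \in \mathbb{R}^n : \sum v_i = 0\}$, which contains vectors with $v_1 > -1$ and $v_i > 0$ for $i \geq 2$. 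Ehrhart's theorem then gives that $\#(cR_1 \cap \mathbb{Z}^{n-1})$ is a quasipolynomial in $c$ of degree $n-1$.

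For the period, it suffices to prove that every vertex of $R_1$ has coordinates in $\tfrac{1}{\kappa(G)}\mathbb{Z}$, since this makes $\kappa(G)\,R_1$ an integer polytope. A vertex arises by making all but one of the $n$ defining inequalities tight; relaxing the inequality indexed by vertex $i_0$ of $G$, the vertex solves a linear system $Mx' = b$, where $M$ is the $(n-1)\times(n-1)$ minor of $L$ obtained by deleting row $i_0$ and column $n$, and $b$ is $-e_1$ with coordinate $i_0$ removed. Cramer's rule then expresses each coordinate of $x'$ as a ratio of integers with denominator $\det M$, so it remains to show $|\det M| = \kappa(G)$ for every $i_0$. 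This follows from the standard strengthening of Theorem \ref{mtthm}: since $L\cdot\mathrm{adj}(L) = \det(L)\,I = 0$, each column of $\mathrm{adj}(L)$ lies in $\ker L = \mathrm{span}((1,\dots,1)^t)$, and the symmetric argument applied to $L^t$ gives that each row lies in $\mathrm{span}((1,\dots,1))$, so $\mathrm{adj}(L)$ is a constant matrix whose common entry must be $\pm\kappa(G)$ by comparison with Theorem \ref{mtthm}.

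The main obstacle is this last step, namely the upgrade from the specific symmetric cofactor $L'$ of Theorem \ref{mtthm} to the uniform statement that \emph{every} $(n-1)\times(n-1)$ cofactor of $L$ has absolute determinant $\kappa(G)$. Once this bound is in hand, the degree and period claims follow at once from Ehrhart's theorem applied to $R_1$.
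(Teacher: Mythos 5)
Your proposal is correct and follows essentially the same route as the paper's proof: reduce via Theorem \ref{triv2} to counting lattice points in the $c$-fold dilate of a fixed rational $(n-1)$-dimensional polytope, apply Ehrhart's theorem for the degree-$(n-1)$ quasipolynomial, and bound the period by showing $\kappa(G)$ times each vertex is integral via Cramer's rule and the fact that every cofactor of $L$ has absolute value $\kappa(G)$. The only difference is cosmetic: where the paper simply cites Theorem \ref{mtthm} for the mixed cofactors $L'_i$, you spell out the adjugate argument justifying that strengthening, which is a welcome extra bit of care.
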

\begin{proof}
Recall that (Theorem \ref{triv2}) a configuration $C' = (c_1, c_2, \dots, c_n)$ is debt-reachable from $C = (c,0,0,..,0)$ iff $C'-C = L\textbf{x}$ for some vector $\textbf{x} = (x_1, x_2, \dots, x_n)$ in $\mathbb{Z}^n$. Since $(1,1,\dots,1)$ is the only zero eigenvector of $L$, we can impose the additional condition $x_n = 0$. There is then a bijective correspondence between vectors $\textbf{x} \in \mathbb{Z}^n$ with $x_n = 0$ where $C + L\textbf{x}$ has nonnegative components and debt-reachable configurations $C'$. We therefore want to count the number of such $\textbf{x}$ such that $C+L\textbf{x}$ has nonnegative components. Such $x$ are determined by $n$ linear inequalities in the $n-1$ variables $x_1$ through $x_{n-1}$. Each of these inequalities determine half-spaces of $\mathbb{R}^{n-1}$. Of these half-spaces, $n-1$ pass through the origin. The remaining half-space is of the form $\textbf{a}\cdot \textbf{x} \geq -c$ for some vector $a \in \mathbb{Z}^n$. These $n$ half-spaces thus determine a rational $(n-1)$-simplex in $(n-1)$-dimensional space with one vertex at the origin and $n-1$ other vertices at points $c\textbf{v}_i$ for some $\textbf{v}_i \in \mathbb{Q}^n$ independent of $c$, for $1 \leq i \leq n-1$. 

Therefore, if we let $P$ be the simplex formed when $c=1$, the simplex formed for arbitrary $c$ is just the $c$-fold dilate $cP$ of $P$. Since the number of lattice points contained in $cP$ is given by the Ehrhart quasipolynomial of $P$, the number of configurations debt-reachable from $(c,0,0,\dots,0)$ is a quasipolynomial in $c$. Moreover, since $P$ is $(n-1)$-dimensional, this quasipolynomial has degree $n-1$.

It remains to show that the period of this quasipolynomial divides $\kappa(G)$. To do this, it suffices to show that $\kappa(G)\textbf{v}_i \in \mathbb{Z}^{n-1}$ for each $\textbf{v}_i$. Let $L'_i$ denote the reduced Laplacian matrix formed by removing the last column and the $i$th row from $L$. From the construction of $\textbf{v}_i$, we know that $\textbf{v}_i$ satisfies:

$$L'_i\textbf{v}_i = (1,0,\dots,0)^t$$

so

$$\textbf{v}_i = (L'_{i})^{-1}(1,0,\dots,0)^t$$

Since $L'_i$ is an integer matrix, an elementary theorem of matrix algebra says that $(\det L'_i)(L'_i)^{-1}$ is an integer matrix (namely, the cofactor matrix). Therefore $(\det L'_i)\textbf{v}_i$ is an integer vector. By Theorem \ref{mtthm}, $|\det L'_i| = \kappa(G)$, we have that $\kappa(G)\textbf{v}_i$ is an integer vector, as desired.
\end{proof}

Before we continue further, note that debt-reachability is an equivalence relation on the set of chip-firing configurations on a graph $G$ with a total of $c$ chips, since if configuration $C$ is debt-reachable from configuration $C'$, $C'$ is also debt-reachable from configuration $C$ (this is an immediate consequence of Theorem \ref{triv2}). This equivalence relation therefore partitions this set into several subsets. We will call these subsets \textit{blocks}, and let $b_c$ count the number of such blocks for the set of configurations with a total of $c$ chips.

We will now establish some properties about the sequence $b_c$.

\begin{theorem}\label{thm5}
For all $c\geq 1$, $b_c \geq b_{c-1}$.
\end{theorem}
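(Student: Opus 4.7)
The plan is to exhibit an explicit injection $\phi$ from the set of blocks at level $c-1$ to the set of blocks at level $c$. Fix once and for all some vertex, say vertex $1$, and for a block $B$ at level $c-1$ define $\phi(B)$ to be the block (at level $c$) containing $C + e_1$, where $C$ is any nonnegative chip-firing configuration in $B$ and $e_1$ is the first standard basis vector. Since $C + e_1$ is obtained by adding one chip to a nonnegative configuration, it is again a nonnegative chip-firing configuration, and it has $c$ chips in total, so it lies in some well-defined block at level $c$.

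The first thing to check is that $\phi$ does not depend on the choice of representative $C$. If $C$ and $C'$ are both nonnegative representatives of $B$, then by Theorem \ref{triv2} there exists $x \in \mathbb{Z}^n$ with $C - C' = Lx$. But then $(C + e_1) - (C' + e_1) = Lx$ as well, so again by Theorem \ref{triv2} the configurations $C + e_1$ and $C' + e_1$ lie in the same block at level $c$. Hence $\phi$ is well-defined.

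Next I would verify injectivity. Suppose $\phi(B_1) = \phi(B_2)$, and pick nonnegative representatives $C_1 \in B_1$ and $C_2 \in B_2$. Then $C_1 + e_1$ and $C_2 + e_1$ belong to the same block at level $c$, so by Theorem \ref{triv2} there exists $x \in \mathbb{Z}^n$ with $(C_1 + e_1) - (C_2 + e_1) = Lx$. Cancelling the $e_1$ gives $C_1 - C_2 = Lx$, so $C_1$ and $C_2$ are debt-reachable from one another and $B_1 = B_2$. This gives the desired inequality $b_{c-1} \leq b_c$.

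There is essentially no real obstacle here: the whole argument is a direct consequence of the translation-invariance built into Theorem \ref{triv2}, together with the trivial observation that adding a chip to a nonnegative configuration keeps it nonnegative while incrementing the total chip count by exactly one. The construction works with vertex $1$ replaced by any fixed vertex, and in fact the same reasoning shows $\phi$ is compatible with the action of the sandpile group on the cosets, though this stronger statement is not needed for the theorem.
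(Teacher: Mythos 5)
Your proof is correct and follows essentially the same approach as the paper: both arguments add a chip at a fixed vertex and use the linear-algebraic characterization of debt-reachability (Theorem \ref{triv2}) to see that distinct blocks at level $c-1$ map to distinct blocks at level $c$. Your write-up simply makes the well-definedness and injectivity checks explicit, which the paper leaves implicit.
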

\begin{proof}
Note that if configurations $C = (c_1, c_2, \dots, c_n)$ and $C' = (c'_1, c'_2, \dots, c'_n)$ are not debt-reachable from each other (i.e. lie in different blocks), then the configurations $D = (c_1+1, c_2, \dots, c_n)$ and $D' = (c'_1 + 1, c'_2, \dots, c'_n)$ are likewise not debt-reachable from each other. Also, if $C$ and $C'$ are configurations with $c-1$ chips, then $D$ and $D'$ are configurations with $c$ chips. It therefore follows that $b_c \geq b_{c-1}$, as desired.
\end{proof}

\begin{theorem}\label{thm6}
For all $c$, $b_c \leq \kappa(G)$, and for some sufficiently large $C$, $b_C = \kappa(G)$.
\end{theorem}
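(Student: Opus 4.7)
The plan is to recognize the blocks among configurations with total $c$ chips as cosets of $L\mathbb{Z}^n$ inside an affine lattice, count those cosets via the critical-group identity, and then exhibit nonnegative representatives of every coset at a specific large value of $c$.

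To get the upper bound, I would first invoke Theorem \ref{triv2} to see that two configurations $C, C' \in A_c := \{x \in \mathbb{Z}^n : \sum x_i = c\}$ lie in the same block iff $C - C' \in L\mathbb{Z}^n$. Since each row of $L$ sums to zero, $L\mathbb{Z}^n$ sits inside the zero-sum lattice $\Lambda := A_0$, and $A_c$ is an affine coset of $\Lambda$; therefore $A_c$ decomposes into exactly $[\Lambda : L\mathbb{Z}^n]$ cosets of $L\mathbb{Z}^n$, and $b_c$ counts at most those containing a nonnegative vector. To identify this index with $\kappa(G)$, I would take $\{e_i - e_n\}_{i=1}^{n-1}$ as a basis for $\Lambda$ and the first $n-1$ columns of $L$ as a basis for $L\mathbb{Z}^n$ (the $n$-th column is redundant because $L(1,1,\ldots,1)^t = 0$); the change-of-basis matrix is then precisely the reduced Laplacian $L'$, so the index equals $|\det L'| = \kappa(G)$ by Theorem \ref{mtthm}. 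This gives $b_c \leq \kappa(G)$.

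For the attainment, I would construct nonnegative representatives of all $\kappa(G)$ cosets simultaneously. Fix representatives $r_1, \ldots, r_{\kappa(G)} \in \Lambda$ of the cosets of $L\mathbb{Z}^n$ in $\Lambda$, let $R := \max_{i,j}|(r_i)_j|$, and consider the vectors $r_i + R(1,1,\ldots,1)^t$. Each is nonnegative with chip total $nR$, and for $i \neq j$ they lie in distinct cosets because their difference $r_i - r_j$ is in $\Lambda \setminus L\mathbb{Z}^n$. Hence $b_{nR} = \kappa(G)$, and Theorem \ref{thm5} propagates this equality to all $c \geq nR$. The main technical point is the identification $[\Lambda : L\mathbb{Z}^n] = \kappa(G)$; once that is in hand, the upper bound is immediate, and the attainment claim is a short explicit lattice-point construction combined with the monotonicity of $b_c$.
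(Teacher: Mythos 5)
Your proposal is correct. The upper bound is essentially the paper's argument in different clothing: the paper projects the chip-firing lattice onto the first $n-1$ coordinates and cites the standard fact that $\#\bigl(\mathbb{Z}^{n-1}/L'\mathbb{Z}^{n-1}\bigr)=\det L'$, while you stay inside the zero-sum lattice $\Lambda$ and compute the index $[\Lambda : L\mathbb{Z}^n]$ explicitly via the change of basis from $\{e_i-e_n\}$ to the first $n-1$ columns of $L$, which is indeed $L'$; both routes reduce to the Matrix-Tree Theorem, and your index computation is just a more self-contained version of the paper's cited lattice fact. Where you genuinely diverge is the attainment claim: the paper argues geometrically that the simplex of admissible configurations scales with $c$ and therefore eventually contains a ball large enough to hold a fundamental domain of the lattice, hence a representative of every residue class, whereas you build explicit nonnegative representatives of all $\kappa(G)$ cosets at the single chip total $c=nR$ by adding $R(1,1,\dots,1)^t$ to coset representatives $r_1,\dots,r_{\kappa(G)}\in\Lambda$, then invoke Theorem \ref{thm5} to propagate. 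Your construction is more elementary and constructive --- it even yields an explicit bound $C\leq nR$ on when equality is attained, and it subsumes Corollary \ref{thm6_5} directly --- while the paper's fundamental-domain argument has the advantage of setting up the polytope-scaling picture that is reused in the volume/Ehrhart asymptotics of Theorem \ref{thm7}. (For the bare statement of the theorem you do not even need Theorem \ref{thm5}; the single value $C=nR$ already suffices.)
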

\begin{proof}
Consider the integer lattice in $n$-dimensional space containing the points $Lx$ for all vectors $x$ in $\mathbb{Z}^n$ (i.e. it is generated by the rows of $L$). We refer to this lattice as the chip-firing lattice,  since an element $(r_1, r_2, \dots, r_n)$ lies in this lattice iff there is some sequence of chip-firing moves which sends a configuration $(c_1,c_2, \dots ,c_n)$ to $(c_1+r_1, c_2+r_2,\dots, c_n+r_n)$. We denote this lattice by $CFL$.

Note that for any element $\textbf{r} = (r_1, r_2, \dots, r_n)$ in this lattice, $r_1 + r_2 + \dots + r_n = 0$. We can therefore project this lattice onto the subspace where $r_n = 0$ and uniquely identify any element in the original $CFL$ by the first $n-1$ components, $(r_1, r_2, \dots , r_{n-1})$. We denote this modified lattice by $CFL'$. Furthermore, note that since the last row of $L$ is an integer linear combination of the first $n-1$ rows of $L$ (since the sum of all $n$ rows is zero), the lattice $CFL'$ consists of all points $L'\textbf{x}$, where $\textbf{x}$ lies in $\mathbb{Z}^{n-1}$ and $L'$ is the reduced Laplacian matrix.

We then have that the number of elements of $\mathbb{Z}^{n-1}/CFL' = \mathbb{Z}^{n-1}/L'\textbf{x}$ is equal to $\det L'$ (this is a standard fact in the theory of lattices). In other words, there are at most $\det L'$ distinct ``residue classes'' of chip-firing configurations; each possible chip-firing configuration is debt-reachable from one of a particularly chosen set of $\det L'$ configurations. By Theorem \ref{mtthm}, $\det L' = \kappa(G)$, so $b_c \leq \kappa(G)$ for all $c$

Now, $b_c$ is actually counting the number of distinct residue classes of the lattice points in $\mathbb{Z}^{n-1}$ contained in the rational $(n-1)$-simplex defined by the $n$ inequalities $r_1 \geq -c$, $r_i \geq 0$ for $2 \leq i \leq n-1$, and $(r_1+r_2+\dots +r_{n-1}) \leq 0$.

This region scales with $c$, so eventually it will contain a sphere large enough that must contain a fundamental region of $CFL'$. At this point, it will contain a point for every single residue class, so there is some $C$ such that $b_C = \kappa(G)$, as desired.
\end{proof}

\begin{remark}
The finite abelian group $\mathbb{Z}^{n-1}/CFL'$ of equivalence classes modulo the lattice $CFL'$ is commonly known as the \textit{sandpile group} of the graph $G$. Much research into chip-firing on graphs has focused on the properties of these groups; see \cite{Bi, Le} for details.
\end{remark}

As a corollary of Theorems \ref{thm5} and \ref{thm6} we have:

\begin{corollary}\label{thm6_5}
There exists a $C$ such that $b_c = \kappa(G)$ for all $c \geq C$.
\end{corollary}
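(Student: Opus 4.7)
The plan is to combine Theorems \ref{thm5} and \ref{thm6} in essentially one line. By Theorem \ref{thm6}, there exists some $C$ with $b_C = \kappa(G)$, and moreover $b_c \leq \kappa(G)$ holds for every $c$. By Theorem \ref{thm5}, the sequence $b_c$ is nondecreasing. So for any $c \geq C$ we have the sandwich
$$\kappa(G) = b_C \leq b_c \leq \kappa(G),$$
forcing $b_c = \kappa(G)$.

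The only thing to check is that this chain is genuinely valid: the lower bound uses the monotonicity from Theorem \ref{thm5} iterated $c - C$ times, and the upper bound is the universal bound from Theorem \ref{thm6}. There is no obstacle here — the corollary is just the combination of a nondecreasing integer sequence bounded above by $\kappa(G)$ that actually attains $\kappa(G)$ at some point, which must then stabilize at $\kappa(G)$ from that point onward.
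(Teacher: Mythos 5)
Your proof is correct and is exactly the argument the paper intends: combine the monotonicity of $b_c$ from Theorem \ref{thm5} with the upper bound and attainment of $\kappa(G)$ from Theorem \ref{thm6}, so the nondecreasing sequence stabilizes at $\kappa(G)$. The paper states the corollary without writing this out, and your sandwich argument supplies precisely the intended justification.
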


As another consequence of Theorems \ref{thm5} and \ref{thm6}, we have the following interesting (albeit slightly tangential) result:

\begin{corollary}\label{trees}
If $G$ is a tree, then all chip-firing configurations on $G$ with $c$ chips are reachable from eachother.
\end{corollary}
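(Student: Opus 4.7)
The plan is to combine the structural information in Theorem~\ref{thm6} with a firing-sequence argument tailored to trees. Since a tree is its own unique spanning tree, $\kappa(G) = 1$, and Theorem~\ref{thm6} then forces $b_c \leq 1$ for every $c$. Because the configuration set is nonempty (for instance $(c, 0, \dots, 0)$ is always available), we actually have $b_c = 1$, so any two configurations with $c$ chips lie in the same debt-reachability block. This is the debt-reachability half of what we want, and it is an immediate corollary.

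It remains to promote debt-reachability to legal reachability on a tree. Let $C$ and $C'$ be two configurations with $c$ chips, and let $\mathbf{x}$ be the reduced firing vector from $C$ to $C'$, so that $L\mathbf{x} = C' - C$ and $\min_i x_i = 0$. The idea is to realize $\mathbf{x}$ by a greedy leaf-based schedule: at each stage, locate a leaf $v$ of the (possibly pruned) tree with $x_v > 0$ whose current chip count is at least one, fire $v$, and decrement $x_v$. Once a leaf's quota reaches zero, prune it and recurse on the resulting smaller tree. Because leaves have degree one, the legality condition $c_v \geq d_v = 1$ is comparatively mild, and pruning preserves the tree structure so that new leaves are continually exposed.

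The main obstacle is ruling out a stall---a stage at which every leaf with positive residual quota happens to have zero chips. Here I would invoke Theorem~\ref{triv3}: firing every vertex once leaves the configuration invariant, so we are free to replace $\mathbf{x}$ by $\mathbf{x} + (1, 1, \dots, 1)^t$ without changing the target $C'$. Applied judiciously, this rebalancing injects an extra round of firings everywhere that saturates each vertex enough to unblock the greedy schedule. Combined with induction on the number of edges of the tree (each successful leaf pruning strictly reduces the edge count), this should show the greedy procedure terminates at exactly $C'$, yielding a legal firing sequence $C \to C'$ and completing the upgrade from debt-reachability to reachability.
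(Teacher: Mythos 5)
Your first paragraph is exactly the argument the paper intends: the corollary is stated as a consequence of Theorems \ref{thm5} and \ref{thm6}, and for a tree $\kappa(G)=1$, so $b_c\leq\kappa(G)=1$ forces all configurations with $c$ chips into a single debt-reachability block. Up to that point you and the paper agree.

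The second half of your proposal --- promoting debt-reachability to legal reachability via a greedy leaf schedule --- contains a genuine gap, and in fact the strengthened claim you are trying to prove is false as a statement about legal reachability. Take $G=P_3$ and the configuration $(0,1,0)$: the middle vertex has $1<2=d_2$ chips and each leaf has $0<1$ chips, so no vertex can fire and the only configuration legally reachable from $(0,1,0)$ is itself, even though $(1,0,0)$ is debt-reachable from it (fire vertices $2$ and $3$ once each, passing through debt). Your proposed fix for a stall is to replace $\mathbf{x}$ by $\mathbf{x}+(1,1,\dots,1)^t$ using Theorem \ref{triv3}, but this changes only the bookkeeping of how many firings remain, not the current chip configuration; if every vertex currently satisfies $c_i<d_i$, no inflation of the firing vector creates a fireable vertex, and the greedy schedule stalls permanently. (There is also a secondary slip: after pruning a leaf, the newly exposed ``leaves'' still carry their original degrees, so the legality condition is not the mild $c_v\geq 1$ you invoke.) The correct reading of the corollary, consistent with its derivation from Theorems \ref{thm5} and \ref{thm6}, is the debt-reachability statement you established in your first paragraph --- that all configurations with $c$ chips lie in one block; the upgrade to legal reachability should be dropped rather than repaired.
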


Finally, we cite the following well-known lemma regarding the approximation of volumes.

\begin{lemma}\label{vollemma}
Let $P$ is a polytope in $\mathbb{R}^n$ with content $V(P)$, and let $L$ be a lattice in $\mathbb{R}^n$ whose fundamental domain has content $\mu(L)$. Then as $c$ tends to infinity, the number of points of $L$ inside $cP$ is asympotically equal to 

$$\dfrac{c^nV(P)}{\mu(L)}$$
\end{lemma}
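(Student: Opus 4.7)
The plan is to reduce to the case of the integer lattice $L = \mathbb{Z}^n$ and then compare the number of lattice points in $cP$ to the volume of $cP$ using a standard cube-packing argument. First I would pick a basis $v_1, \dots, v_n$ of $L$ and let $T$ be the linear map sending the standard basis of $\mathbb{Z}^n$ to $v_1, \dots, v_n$; by definition $|\det T| = \mu(L)$. Setting $P' = T^{-1}(P)$, the lattice points of $L$ in $cP$ are in bijection with the lattice points of $\mathbb{Z}^n$ in $cP'$, and $V(P') = V(P)/\mu(L)$. So it suffices to show that the number of lattice points of $\mathbb{Z}^n$ in $cP'$ is asymptotic to $c^n V(P')$.

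Next I would use the classical inner/outer cube approximation. To each $v \in \mathbb{Z}^n \cap cP'$ I associate the half-open unit cube $Q_v = v + [0,1)^n$. These cubes tile a region $U_c$ of total volume exactly equal to $\#(\mathbb{Z}^n \cap cP')$. The region $U_c$ contains every cube $Q_v$ which lies entirely inside $cP'$, and is in turn contained in the $\sqrt n$-neighborhood of $cP'$. Thus
\[
\mathrm{vol}\bigl(\{x \in cP' : \mathrm{dist}(x, \partial(cP')) \geq \sqrt n\}\bigr) \;\leq\; \#(\mathbb{Z}^n \cap cP') \;\leq\; \mathrm{vol}\bigl(cP' + B(0,\sqrt n)\bigr).
\]
Both the inner and outer regions differ from $cP'$ by a boundary strip of thickness $O(1)$, which has volume $O(c^{n-1})$ (bounded above by a constant times the $(n-1)$-dimensional surface area of $cP'$, which scales as $c^{n-1}$). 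Since $\mathrm{vol}(cP') = c^n V(P')$, we conclude that
\[
\#(\mathbb{Z}^n \cap cP') \;=\; c^n V(P') + O(c^{n-1}),
\]
which gives the desired asymptotic after dividing by $c^n$.

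The main obstacle is the boundary estimate, but this is standard: one may either invoke the fact that a bounded polytope has finite $(n-1)$-dimensional surface area and that $cP' + B(0, \sqrt n) \setminus cP'$ has volume at most (surface area of $cP'$) $\cdot\, \sqrt n$ plus lower-order corrections, or simply cover $\partial(cP')$ by finitely many flat pieces each contributing $O(c^{n-1})$. Either way, the boundary contribution is of strictly lower order than the main term $c^n V(P')$, and the asymptotic equivalence follows.
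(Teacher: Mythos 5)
Your proof is correct, and it is the standard argument for this classical fact. Note that the paper does not actually prove the lemma at all: its ``proof'' is a one-line citation to Beck and Robins (\cite{Be}), so your self-contained argument does strictly more than the paper does. Both of your steps are sound: the reduction to $\mathbb{Z}^n$ via the linear map $T$ carrying the standard basis to a basis of $L$ (with $|\det T| = \mu(L)$, so $V(P') = V(P)/\mu(L)$ and lattice points correspond bijectively), and the sandwich
$$\mathrm{vol}\bigl(\{x \in cP' : \mathrm{dist}(x,\partial(cP')) \geq \sqrt n\}\bigr) \leq \#(\mathbb{Z}^n \cap cP') \leq \mathrm{vol}\bigl(cP' + B(0,\sqrt n)\bigr),$$
with the boundary strip of thickness $O(1)$ contributing only $O(c^{n-1})$ because $\partial(cP')$ is covered by finitely many facets whose $(n-1)$-content scales like $c^{n-1}$. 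The only caveat worth stating explicitly is that the lemma as phrased implicitly assumes $P$ is full-dimensional in $\mathbb{R}^n$ (otherwise $V(P)=0$ and the asymptotic statement degenerates); this is harmless here, since the paper applies the lemma only to a full-dimensional simplex $P_1 \subset \mathbb{R}^{n-1}$ with the lattice $L'\mathbb{Z}^{n-1}$. So your write-up could either be cited away, as the paper does, or kept as a short appendix-style proof; as written it is complete up to that full-dimensionality remark.
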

\begin{proof}
Refer to \cite{Be}.
\end{proof}

We are now ready to derive an asymptotic estimate for the sizes of the blocks in $b_c$.

\begin{theorem}\label{thm7}
The size of each of the blocks in $b_c$ is asymptotically equivalent to

$$\dfrac{1}{\kappa(G)}\binom{c+n-1}{n-1}$$

as $c$ approaches infinity. This in turn is asymptotically equivalent to

$$\dfrac{c^{n-1}}{(n-1)!\kappa(G)}$$
\end{theorem}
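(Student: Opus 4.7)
The plan is to count each block as the intersection of the simplex of all configurations with $c$ chips with a single coset of the chip-firing lattice $CFL'$, then apply the volume approximation (Lemma \ref{vollemma}) to each such coset. I would begin by identifying a chip configuration $(c_1,\dots,c_n)$ with the total $c$ by its first $n-1$ coordinates, giving a bijection between such configurations and lattice points of
$$\Delta_c = \{(r_1,\dots,r_{n-1}) \in \mathbb{R}^{n-1} : r_i \geq 0,\ r_1 + \cdots + r_{n-1} \leq c\}.$$
Noting that $\Delta_c = c\Delta_1$ and that $\Delta_1$ has $(n-1)$-dimensional volume $\tfrac{1}{(n-1)!}$, the total number of such configurations is $\#(\Delta_c \cap \mathbb{Z}^{n-1}) = \binom{c+n-1}{n-1}$.

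Next, I would use the description of blocks from the proof of Theorem \ref{thm6}: two configurations lie in the same block iff they project to points of $\mathbb{Z}^{n-1}$ lying in the same coset of $CFL'$. Hence each block is of the form $(v + CFL') \cap \Delta_c$ for some representative $v \in \mathbb{Z}^{n-1}$. Lemma \ref{vollemma} as stated counts points of a lattice $L$ inside $cP$, but its proof is translation-invariant, so the same asymptotic applies to any coset $v + L$: as $c \to \infty$ the number of points of $v + CFL'$ inside $\Delta_c$ is asymptotic to
$$\frac{c^{n-1}\, V(\Delta_1)}{\mu(CFL')} = \frac{c^{n-1}/(n-1)!}{\kappa(G)} = \frac{c^{n-1}}{(n-1)!\,\kappa(G)},$$
since the fundamental domain of $CFL' = L'\mathbb{Z}^{n-1}$ has content $|\det L'| = \kappa(G)$ by the Matrix-Tree Theorem (Theorem \ref{mtthm}).

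For the second asymptotic equivalence, one just expands $\binom{c+n-1}{n-1} = \tfrac{(c+n-1)(c+n-2)\cdots(c+1)}{(n-1)!} \sim \tfrac{c^{n-1}}{(n-1)!}$, so dividing by $\kappa(G)$ gives the same leading term as above, establishing both equivalences at once.

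The only genuine obstacle is the translation of Lemma \ref{vollemma} from a lattice to a coset of it; I would handle this by remarking that the standard proof (covering $cP$ by translates of the fundamental parallelepiped and comparing boundary effects, of lower order in $c$) is insensitive to which coset one counts, since any shift moves only an $O(c^{n-2})$ boundary layer of fundamental domains. Everything else is an application of already-established facts.
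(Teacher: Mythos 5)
Your proposal is correct and follows essentially the same route as the paper: identify each block with the intersection of a translate (coset) of the lattice $L'\mathbb{Z}^{n-1}$ with the $c$-fold dilate of a fixed simplex, apply Lemma \ref{vollemma} to that translated lattice (whose fundamental domain has content $|\det L'| = \kappa(G)$), and observe that translation does not affect the asymptotic count. The only cosmetic difference is that you compute $V(\Delta_1) = 1/(n-1)!$ directly, whereas the paper gets the first form of the estimate by dividing the total count $\binom{c+n-1}{n-1}$ among the $\kappa(G)$ asymptotically equal blocks; both arguments give the same conclusion.
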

\begin{proof}
Note that there are a total of $\binom{c+n-1}{n-1}$ chip-firing configurations with a total of $c$ chips on a graph with $n$ vertices. Since there are a total of $\kappa(G)$ blocks for sufficiently large $c$ (by Corollary \ref{thm6_5}), it suffices to show that all blocks of $b_c$ have asymptotically the same size. 

Let $P_c$ be the polytope determined by the convex hull of the region defined in the proof of Theorem \ref{thm6}. Let $B$ be a specific block of $b_c$, and let $\textbf{v}$ be a chip-firing configuration belonging to $B$. Then if we consider the (uncentered) lattice containing the elements $\textbf{v} + L'\textbf{x}$ for $\textbf{x}\in\mathbb{Z}^{n-1}$, $|B|$ is simply equal to the number of points belonging to this lattice that also lie within $P_c$. 

Now, as mentioned in the proof of \ref{thm6}, the region $P_c$ scales with $c$ (so $P_c = cP_1$). Hence, by Lemma \ref{vollemma}, we have that $|B|$ asymptotically approaches:

$$\dfrac{c^{n-1}V(P_1)}{\mu(\textbf{v}+L'\textbf{x})}$$

Since the volume of the fundamental region of a lattice is unchanged by translating the lattice, $\mu(\textbf{v}+L'\textbf{x})$ is independent of $\textbf{v}$ (in fact, it is equal to $\det L' = \kappa(G)$). Hence $|B|$ asymptotically approaches the same value regardless of the choice of block $B$, and therefore all blocks of $b_c$ have asymptotically the same size, as desired.
\end{proof}

But now, since (by definition) the configurations debt-reachable from a chip-firing configuration are those that belong to the same block as the configuration, we have the following main theorem as an immediate consequence of the preceding result.

\begin{theorem}\label{thm7_5}
The number of configurations debt-reachable from an arbitrary configuration with $c$ chips asymptotically approaches 

$$\dfrac{c^{n-1}}{(n-1)!\kappa(G)}$$

as $c$ approaches infinity.
\end{theorem}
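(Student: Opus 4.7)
The plan is to observe that this theorem follows directly from Theorem \ref{thm7} together with the interpretation of debt-reachability as an equivalence relation. First I would recall that, as noted in the paragraph preceding Theorem \ref{thm5}, debt-reachability is a genuine equivalence relation on the set of configurations with a fixed total of $c$ chips, and hence partitions this set into blocks. By the definition of debt-reachability, the set of configurations debt-reachable from a configuration $C$ with $c$ chips is precisely the block of the partition containing $C$.

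Next, I would invoke Theorem \ref{thm7}, which asserts that \emph{every} block in the partition of configurations with $c$ chips has size asymptotically equal to
$$\frac{c^{n-1}}{(n-1)!\,\kappa(G)}$$
as $c \to \infty$. Combining this with the observation of the previous step, the cardinality of the block containing an arbitrary $C$ with $c$ chips is asymptotically $\frac{c^{n-1}}{(n-1)!\,\kappa(G)}$, which is exactly the quantity being estimated.

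The only subtlety worth flagging in the writeup is that the statement refers to \emph{an arbitrary} configuration with $c$ chips rather than a specific one, so I would emphasize that Theorem \ref{thm7} applies uniformly to every block, not merely to one distinguished block; this is what allows us to sidestep any dependence on the choice of $C$. There is no real obstacle here, since the heavy lifting (the volume-based lattice count via Lemma \ref{vollemma} and the identification of all block sizes as asymptotically equal) was already carried out in the proof of Theorem \ref{thm7}; the present theorem amounts to translating that statement about block sizes into the language of debt-reachable configurations.
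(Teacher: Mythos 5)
Your proposal is correct and matches the paper's own argument: the paper likewise derives Theorem \ref{thm7_5} as an immediate consequence of Theorem \ref{thm7}, noting that the configurations debt-reachable from a given configuration are exactly the members of its block. Nothing further is needed.
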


In particular, since by Theorem \ref{thm4} the number of debt-reachable configurations from $(n,0,\dots,0)$ is given by a quasipolynomial of degree $n-1$, this implies that the leading term of this quasipolynomial is always equal to $\dfrac{1}{(n-1)!\kappa(G)}$.

\section{Debt-reachable configurations on $C_n$}\label{sec5}

In this section we apply the results of the previous section to obtain an exact formula for the number of debt-reachable configurations on $C_n$ for an infinite number of configurations of the form $(c,0,\dots,0)$. 

First, note that as a corollary of Theorem \ref{thm7_5} (along with the fact that $\kappa(C_n) = n$), we know that the number of debt-reachable configurations from $(c,0,\dots,0)$ on $C_n$ is asymptotically equal to $\frac{c^{n-1}}{n!}$. This matches up with our explicit formulae for $C_3$, where the number of debt-reachable configurations is asymptotically equivalent to $\frac{c^2}{6}$.

To strengthen this result, we must make some additional observations regarding the numbers $b_c$ in the case of $C_n$. Let $e_i$ represent the configuration where there is one chip on vertex $i$ and zero chips elsewhere, and let $ce_i$ similarly represent the configuration where there are $c$ chips on vertex $i$ and zero chips elsewhere.

\begin{theorem}\label{thm8}
For the case of $G = C_n$, $b_1 = n$.
\end{theorem}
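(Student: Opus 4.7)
The plan is to show $b_1 \geq n$ by exhibiting a chip-firing invariant that takes $n$ distinct values on the $n$ one-chip configurations, and combine this with the upper bound $b_1 \leq \kappa(C_n) = n$ from Theorem \ref{thm6}.

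First observe that the configurations on $C_n$ with exactly one chip are precisely $e_1, e_2, \dots, e_n$, giving $b_1 \leq n$ trivially (or via Theorem \ref{thm6}). It therefore suffices to show that no two distinct $e_i$, $e_j$ lie in the same block. By Theorem \ref{triv2}, this is equivalent to proving that $e_i - e_j \notin L\mathbb{Z}^n$ whenever $i \neq j$.

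To this end, I would define the candidate invariant $\phi:\mathbb{Z}^n \to \mathbb{Z}/n\mathbb{Z}$ by
$$\phi(c_1,c_2,\dots,c_n) = \sum_{i=1}^{n} i\, c_i \pmod{n},$$
and verify that $\phi$ vanishes on $L\mathbb{Z}^n$. Since $L\mathbb{Z}^n$ is generated by the columns $Le_i$ of the Laplacian, one only needs to check each generator. For an interior vertex $2 \leq i \leq n-1$, the column $Le_i = 2e_i - e_{i-1} - e_{i+1}$ gives $\phi(Le_i) = 2i - (i-1) - (i+1) = 0$. For the two wrap-around cases, $Le_1 = 2e_1 - e_2 - e_n$ yields $\phi(Le_1) = 2 - 2 - n \equiv 0$, and $Le_n = 2e_n - e_{n-1} - e_1$ yields $\phi(Le_n) = 2n - (n-1) - 1 = n \equiv 0$, all modulo $n$.

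Since $\phi$ is invariant under chip-firing, the assumption $e_i - e_j \in L\mathbb{Z}^n$ would force $\phi(e_i) \equiv \phi(e_j) \pmod n$, i.e.\ $i \equiv j \pmod n$, which in the range $1 \leq i,j \leq n$ means $i = j$. Therefore the $n$ one-chip configurations belong to $n$ pairwise distinct blocks, yielding $b_1 \geq n$ and hence $b_1 = n$. The only non-routine step is guessing the invariant $\phi$; once written down, the verification reduces to the three short mod-$n$ computations above. Conceptually, $\phi$ is just the standard isomorphism between the sandpile group of $C_n$ and $\mathbb{Z}/n\mathbb{Z}$, restricted to single-chip configurations.
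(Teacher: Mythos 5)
Your proof is correct, but it takes a different route from the paper. The paper argues directly: it assumes $L\mathbf{x} = e_i - e_1$ has an integer solution, passes to the difference variables $y_j = x_{j+1} - x_j$, solves the resulting system explicitly, and observes that the solution forces $y_j \in \frac{i-1}{n} + \mathbb{Z}$, which is never integral for $2 \leq i \leq n$. You instead construct the homomorphism $\phi(c_1,\dots,c_n) = \sum_i i\,c_i \bmod n$, check that it vanishes on the generators $Le_i$ of the chip-firing lattice, and conclude that $\phi(e_i) = i$ separates the $n$ one-chip configurations into distinct blocks; your generator checks (including the two wrap-around columns) are correct, and since $\phi$ is a group homomorphism it indeed suffices to verify it on the columns of $L$. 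Your approach is cleaner in that it avoids solving a linear system, and it is conceptually more informative: it exhibits the standard identification of the sandpile group of $C_n$ with $\mathbb{Z}/n\mathbb{Z}$, and it immediately gives the "only if" half of the paper's Theorem \ref{thm9} as well, since $\phi(ce_i - ce_j) = c(i-j) \bmod n$. What the paper's explicit-solution method buys in exchange is that it produces actual integer firing vectors when $\gcd(c,n) > 1$, which is exactly what is needed for the converse direction of Theorem \ref{thm9}; your invariant alone would need the extra observation that $\phi$ is a complete invariant (e.g.\ by comparing with $\kappa(C_n) = n$) to recover that direction.
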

\begin{proof}
We must show that the configuration $e_i$ is not debt-reachable from $e_j$ for all $i\neq j$. Due to the symmetry of $C_n$, it suffices to just show that $e_i$ is not debt-reachable from $e_1$ for $i\neq 1$. We proceed to do this.

Assume $e_i$ is debt-reachable from $e_1$, and let $\textbf{x}$ be a corresponding firing vector. We thus have that:

$$L\textbf{x} = e_i - e_1$$

Writing out $\textbf{x} = (x_1, x_2, \dots, x_n)$, this is equivalent to the following set of $n$ equalities:

\begin{eqnarray*}
x_{n} -2x_1 + x_2 &=& -1 \\
x_{i-1} - 2x_i + x_{i+1} &=& 1\\
x_{j-1} - 2x_j + x_{j+1} &=& 0 \mbox{ for } j \neq 1, i
\end{eqnarray*}

Define $y_i = x_{i+1} - x_i$, where indices are taken modulo $n$ (so $x_{n+1} = x_1$). This system is then equivalent to:

\begin{eqnarray*}
y_1 - y_n &=& -1 \\
y_i - y_{i-1} &=& 1\\
y_{j} - y_{j-1} &=& 0 \mbox{ for } j \neq 1, i
\end{eqnarray*}

Note that we also have the condition $y_1 + y_2 + \dots + y_n = 0$. This system is easy to solve (seeing as the $y_j$ take on only two different values), and has solution $y_j = \frac{i-1}{n}-1$ for $1\leq j \leq i-1$ and $y_j = \frac{i-1}{n}$ otherwise. Since there is no value of $i$ between $2$ and $n$ inclusive for which these $y_i$ are integers, as a consequence, the $x_i$ cannot possibly all be integral, and therefore there is no such firing vector, as desired.

\end{proof}

\begin{corollary}\label{thm8_5}
For $G = C_n$, $b_c = n$ for all $c$.
\end{corollary}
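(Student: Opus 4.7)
The plan is to observe that this corollary is immediate from the three results already established: Theorem \ref{thm8} (which gives $b_1 = n$), Theorem \ref{thm5} (monotonicity of $b_c$), and Theorem \ref{thm6} (upper bound $b_c \leq \kappa(G)$), together with the classical fact that $\kappa(C_n) = n$.

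Concretely, I would argue as follows. First, recall that the number of spanning trees of $C_n$ equals $n$ (each spanning tree is obtained by deleting exactly one of the $n$ edges of the cycle). Then Theorem \ref{thm6} gives the upper bound $b_c \leq \kappa(C_n) = n$ for every $c$. On the other hand, Theorem \ref{thm8} establishes the base case $b_1 = n$, and the monotonicity statement in Theorem \ref{thm5} promotes this to $b_c \geq b_1 = n$ for all $c \geq 1$. Combining the inequalities $n \leq b_c \leq n$ forces $b_c = n$ for all $c \geq 1$.

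There is essentially no obstacle here: the hard work has already been done in Theorem \ref{thm8}, where it was shown that the single-chip configurations $e_1, \dots, e_n$ lie in $n$ distinct debt-reachability blocks by analyzing the integrality of the associated firing vectors. Once those $n$ blocks are in hand at $c=1$, Theorem \ref{thm5} guarantees they persist as distinct blocks after adding any number of additional chips to a fixed vertex, and Theorem \ref{thm6} together with the spanning tree count of $C_n$ prevents any new blocks from appearing.
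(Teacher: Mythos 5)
Your proof is correct and follows essentially the same route as the paper, which likewise combines $\kappa(C_n)=n$ with Theorems \ref{thm8} and \ref{thm6}. The only difference is that you explicitly invoke the monotonicity of Theorem \ref{thm5} to pass from $b_1=n$ to $b_c\geq n$, a step the paper leaves implicit, so if anything your write-up is slightly more complete.
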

\begin{proof}
Follows from the fact that $\kappa(C_n) = n$ along with Theorems \ref{thm8} and \ref{thm6}.
\end{proof}

\begin{theorem}\label{thm9}
For $C_n$, there exist no pair of distinct indices $i,j$ such that $ce_i$ is not debt reachable from $ce_j$ iff $c$ is relatively prime to $n$.
\end{theorem}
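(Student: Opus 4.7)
The plan is to generalize the proof of Theorem \ref{thm8} by reducing the debt-reachability question between $ce_i$ and $ce_j$ to a single divisibility condition on $n$, $c$, and $j - i$. By Theorem \ref{triv2}, $ce_i$ is debt-reachable from $ce_j$ iff the system $L\textbf{x} = c(e_j - e_i)$ has a solution $\textbf{x} \in \mathbb{Z}^n$. Introducing the cyclic differences $y_k = x_{k+1} - x_k$ (indices mod $n$), the $k$th row of this system becomes
\[
y_{k-1} - y_k = c(\delta_{kj} - \delta_{ki}),
\]
so $(y_k)$ is constant around the cycle except for a jump of $+c$ at $k = i$ and a jump of $-c$ at $k = j$.

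Assuming without loss of generality $i < j$, this forces $y_k$ to equal some constant $a$ on the arc $\{j, j+1, \ldots, n, 1, \ldots, i-1\}$ and $a + c$ on $\{i, i+1, \ldots, j - 1\}$. The telescoping identity $\sum_k y_k = 0$ then gives $na + (j-i)c = 0$, so $a = -(j-i)c/n$. An integer solution $\textbf{x}$ exists iff $a \in \mathbb{Z}$, because once all $y_k$ are integers one can choose $x_1 \in \mathbb{Z}$ freely and recover the remaining $x_k$ by accumulation. This yields the key criterion
\[
ce_i \text{ is debt-reachable from } ce_j \iff n \mid (j - i)c.
\]

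Given this criterion, the theorem becomes a short number-theoretic exercise. The $n$ configurations $ce_1, \ldots, ce_n$ occupy pairwise distinct debt-reachability blocks precisely when $n \nmid dc$ for every $d \in \{1, \ldots, n-1\}$. Setting $m = n/\gcd(n, c)$, the divisibility $n \mid dc$ is equivalent to $m \mid d$, so the nonexistence of such a $d$ forces $m > n - 1$; since $m$ divides $n$, this gives $m = n$, i.e., $\gcd(n, c) = 1$. This yields the desired biconditional.

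The only real obstacle is the cyclic-index bookkeeping in the step-function analysis of $(y_k)$; everything else is already present in the proof of Theorem \ref{thm8}, which handles the case $c = 1$. The extension to general $c$ only changes the size of the jumps in $y$, not the structure of the argument.
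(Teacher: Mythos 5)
Your proposal is correct and takes essentially the same route as the paper: pass to the cyclic difference variables $y_k = x_{k+1}-x_k$, solve the resulting two-level step system to get the divisibility criterion (the paper's $n \mid c(i-1)$ with $j=1$ fixed by symmetry, your $n \mid (j-i)c$ in general), and finish with the gcd argument, the paper exhibiting the witness $i = 1 + n/\gcd(c,n)$ directly. Note also that your reading of the statement (the $ce_i$ lie in pairwise distinct blocks iff $\gcd(c,n)=1$) is the intended one, consistent with the paper's proof and its use in Theorem \ref{thm10}.
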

\begin{proof}
We proceed identically to the proof Theorem \ref{thm8}. Such a pair of indices can only exist if there is an integer solution to:

\begin{eqnarray*}
y_1 - y_n &=& -c \\
y_i - y_{i-1} &=& c\\
y_{j} - y_{j-1} &=& 0 \mbox{ for } j \neq 1, i
\end{eqnarray*}

with $y_1 + y_2 + \dots + y_n = 0$. Similarly to before, the solution to this system of equations is  $y_j = c(\frac{i-1}{n}-1)$ for $1\leq j \leq i-1$ and $y_j = c(\frac{i-1}{n})$ otherwise. If $c$ is relatively prime to $n$, there can exist no integer solution for $y_j$ by the same logic as in Theorem \ref{thm8}. On the other hand, if $\gcd(c,n) = r > 1$, then if we set $i = 1+\frac{n}{r}$, we have an integral solution for the $y_j$, as desired.
\end{proof}

We can now prove the main theorem of this section.

\begin{theorem}\label{thm10}
If $c$ is relatively prime to $n$, then the number of configurations debt-reachable from the configuration $(c,0,0,\dots,0)$ on $C_n$ is equal to:

$$\dfrac{1}{n}\binom{c+n-1}{n-1}$$

\end{theorem}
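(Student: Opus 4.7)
The plan is to combine the information the previous theorems give us about $C_n$ with a symmetry argument. By Corollary \ref{thm8_5}, the set of all chip-firing configurations on $C_n$ with $c$ chips partitions into exactly $n$ debt-reachability blocks, and the total number of such configurations is $\binom{c+n-1}{n-1}$. So if I can show that each of these $n$ blocks has the same size, the result follows immediately by dividing.

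First I would use Theorem \ref{thm9} to pin down the blocks. When $\gcd(c,n)=1$, the $n$ ``concentrated'' configurations $ce_1, ce_2, \dots, ce_n$ lie in pairwise distinct blocks; since there are exactly $n$ blocks, this means each block contains precisely one $ce_i$. So the blocks are naturally indexed by the vertex on which the chips are concentrated in their distinguished representative.

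Next, I would invoke the rotational symmetry of $C_n$. The cyclic rotation $\sigma: i \mapsto i+1 \pmod{n}$ is a graph automorphism, and it induces a bijection on chip-firing configurations that sends $ce_i$ to $ce_{i+1}$ and commutes with the Laplacian action in the sense that $\sigma(C)$ is debt-reachable from $\sigma(C')$ iff $C$ is debt-reachable from $C'$. Thus $\sigma$ restricts to a bijection between the block containing $ce_i$ and the block containing $ce_{i+1}$. In particular, all $n$ blocks have the same cardinality. Dividing $\binom{c+n-1}{n-1}$ by $n$ gives the desired count of configurations in the block of $(c,0,\dots,0) = ce_1$.

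The only subtle point, and the one I would want to verify carefully, is that the rotational symmetry really does preserve debt-reachability between arbitrary configurations (not just between the special ones $ce_i$). This is essentially automatic: if $L\mathbf{x} = C - C'$, then applying the permutation matrix $P_\sigma$ associated with $\sigma$ to both sides gives $L(P_\sigma \mathbf{x}) = P_\sigma C - P_\sigma C'$, using the fact that $P_\sigma L P_\sigma^{-1} = L$ because $\sigma$ is an automorphism of $C_n$. So no real obstacle arises, and the argument is quite short once Theorems \ref{thm8_5} and \ref{thm9} are in hand.
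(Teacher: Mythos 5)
Your proposal is correct and follows essentially the same route as the paper: use Theorem \ref{thm9} to place the $n$ configurations $ce_i$ in distinct blocks, Theorem \ref{thm8} (or Corollary \ref{thm8_5}) to know there are exactly $n$ blocks, and the rotational symmetry of $C_n$ to conclude all blocks have equal size, then divide $\binom{c+n-1}{n-1}$ by $n$. The only difference is that you make explicit the verification that the rotation preserves debt-reachability (via $P_\sigma L P_\sigma^{-1} = L$), which the paper leaves as ``by the symmetry of $C_n$.''
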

\begin{proof}
Clearly, the number of configurations debt-reachable from the configuration $ce_1$ must equal the number of configurations debt-reachable from the configuration $ce_i$, by the symmetry of $C_n$. Theorem \ref{thm9} shows that if $c$ is relatively prime to $n$, all the configurations $ce_i$ lie in distinct blocks, and theorem \ref{thm8} show that there are always exactly $n$ blocks. Hence, since there are a total of $\binom{c+n-1}{n-1}$ configurations, each block must contain $\frac{1}{n}\binom{c+n-1}{n-1}$ configurations, from which the result follows.

\end{proof}

We can use Theorem \ref{thm10} to deduce the following information about the number of chip-firing configurations for values of $c$ not relatively prime to $n$.

\begin{theorem}\label{thm10_5}
The number of configurations debt-reachable from $(c,0,0,...,0)$ on $C_n$ (for $c$ not necessarily relatively prime to $n$) is equal to

$$\dfrac{1}{n}\binom{c+n-1}{n-1} + f(c \bmod n)$$

where $f(c \bmod n)$ is a constant which depends only on the remainder of $c$ modulo $n$.
\end{theorem}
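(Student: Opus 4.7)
The plan is to combine the quasipolynomial structure of the debt-reachable count with the exact formula in the coprime case. By Theorem~\ref{thm4} applied to $C_n$, the number $Q(c)$ of configurations debt-reachable from $(c,0,\dots,0)$ is a quasipolynomial in $c$ of degree $n-1$ with period dividing $\kappa(C_n)=n$. Thus for each residue $r\in\{0,1,\dots,n-1\}$ there is a polynomial $P_r$ of degree $n-1$ with $Q(c)=P_r(c)$ for all $c\equiv r\pmod n$, and by Theorem~\ref{thm7_5} every $P_r$ has leading coefficient $\frac{1}{(n-1)!\,n}$. Setting $R(c):=\frac{1}{n}\binom{c+n-1}{n-1}$, the theorem reduces to showing that $P_r-R$ is a constant $f(r)$ for each $r$.

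For residues $r$ coprime to $n$, this is immediate from Theorem~\ref{thm10}: on the infinite arithmetic progression $\{c\equiv r\pmod n:\gcd(c,n)=1\}$ one has $Q(c)=R(c)$, so the degree-$(n-1)$ polynomials $P_r$ and $R$ agree on infinitely many points and must coincide identically. Hence $f(r)=0$ for every such $r$, which handles all residues outright when $n$ is prime.

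For residues with $d:=\gcd(r,n)>1$, the plan is to exploit the rotational symmetry $\sigma$ of $C_n$. First I would introduce the sandpile-group homomorphism $\psi(c_1,\dots,c_n):=\sum_{i=1}^{n}(i-1)c_i\pmod n$, which a direct check shows vanishes on every row of $L$. The map $\sigma$ permutes the $n$ blocks of configurations of fixed chip count $c$, preserves block sizes, and a short computation shows that $\sigma$ shifts $\psi$ by $c\bmod n$. When $\gcd(c,n)=d$, the $\sigma$-orbit of the block containing $(c,0,\dots,0)$ therefore has size $n/d$; by Theorem~\ref{thm9} this orbit consists precisely of the blocks containing the cyclic rotations $ce_1,\dots,ce_n$, and all of them have size $Q(c)$ by symmetry. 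These $n/d$ blocks collectively contribute $(n/d)Q(c)$ to the total $\binom{c+n-1}{n-1}$.

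The main obstacle will be controlling the remaining $n-n/d$ blocks, whose sizes are not a priori equal to $Q(c)$. The plan is to partition these blocks into their own $\sigma$-orbits, compute each orbit's contribution via a lattice-point argument modeled on the proof of Theorem~\ref{thm4} (applied to the different sandpile-group cosets, whose Ehrhart quasipolynomials all share the leading term $\frac{c^{n-1}}{(n-1)!\,n}$ by Theorem~\ref{thm7_5}), and then invert to solve for $Q(c)$. Summing over all orbits, the hope is that the non-constant lower-order terms will cancel on each residue class, leaving $P_r-R$ equal to a constant depending only on $r$. Carrying out this cancellation in detail for every divisor of $n$, and verifying that no residue-dependent polynomial drift of degree $\geq 1$ survives the summation, is the technical heart of the argument.
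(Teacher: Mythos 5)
Your first paragraph is sound and is essentially the paper's starting point: Theorem \ref{thm4} gives a polynomial $P_r$ of degree $n-1$ on each residue class, and Theorem \ref{thm10} forces $P_r\equiv R$, where $R(c)=\frac1n\binom{c+n-1}{n-1}$, for every $r$ coprime to $n$ (so in particular you are done when $n$ is prime). But for $d=\gcd(r,n)>1$ your argument is only a plan: the $\sigma$-orbit bookkeeping yields nothing beyond $(n/d)\,Q(c)+\bigl(\text{sum of the other }n-n/d\text{ block sizes}\bigr)=\binom{c+n-1}{n-1}$, and you have no control on those other block sizes except their common leading term; the ``hope'' that all residue-dependent terms of degree $\geq 1$ cancel is exactly the content of the theorem in this case, and you leave it unproved. (The paper's own proof handles this case differently, by squeezing $Q(c)$ between its exact values at nearby chip numbers coprime to $n$ using weak monotonicity of $Q$; note that this squeeze also only bounds $Q(c)-R(c)$ by polynomials of degree $n-2$, not by a constant.)

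The gap cannot be closed, because the hoped-for cancellation genuinely fails for composite $n$. On $C_n$ the blocks of configurations with $c$ chips are exactly the level sets of $\varphi(C)=\sum_i i\,c_i \bmod n$ (one checks $\varphi$ kills every row of $L$, it is surjective, and there are exactly $n$ blocks by Corollary \ref{thm8_5}), so $Q(c)=\#\{c_i\geq 0:\ \sum c_i=c,\ \varphi\equiv c\pmod n\}$. For $n=4$ a roots-of-unity count (the character of order $2$ contributes the coefficient of $x^c$ in $(1-x^2)^{-2}$, which grows linearly) gives, for $c\equiv 2\pmod 4$, $Q(c)=\frac14\binom{c+3}{3}+\frac{c+2}{8}$; concretely $Q(2)=3$ and $Q(6)=22$, while $\frac14\binom{c+3}{3}$ equals $2.5$ and $21$, so the deviation is $\tfrac12$, then $1$, then $\tfrac32$ at $c=10$ --- not a function of $c\bmod 4$. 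So for residues not coprime to $n$ the deviation can grow polynomially (degree up to $n-2$), the summation you propose cannot eliminate it, and no completion of your sketch (nor of the paper's monotonicity squeeze) can establish the stated constancy; what your setup does legitimately prove is the coprime case together with the bound $\deg(P_r-R)\leq n-2$.
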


\begin{proof} By Theorem \ref{thm4}, we know that this number of debt-reachable configurations is a quasipolynomial in $c$ whose period we can take to be $\kappa(C_n) = n$. We also know that the number of debt-reachable configurations must be weakly increasing with $c$ (adding a chip cannot possibly decrease the number of debt-reachable configurations). These two observations along with Theorem \ref{thm10} lead to the theorem statement.
\end{proof}

\section{Reachable configurations on arbitrary graphs}\label{sec6}

In this section, we show that the same asymptotic estimate in Theorem \ref{thm7} for the number of debt-reachable configurations also holds for the number of reachable configurations. Specifically, we have the following theorem.

\begin{theorem}\label{thm12}
The number of configurations reachable from an arbitrary configuration with $c$ chips asymptotically approaches 

$$\dfrac{c^{n-1}}{(n-1)!\kappa(G)}$$

as $c$ approaches infinity.
\end{theorem}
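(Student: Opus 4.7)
The plan is to deduce Theorem \ref{thm12} from Theorem \ref{thm7_5} by showing that the reachable and debt-reachable counts differ only by a lower-order term. Every legal chip-firing move is a chip-firing move, so if $R(C)$ and $D(C)$ denote the sets of configurations reachable and debt-reachable from $C$, we have $R(C) \subseteq D(C)$, and Theorem \ref{thm7_5} immediately gives the upper bound $|R(C)| \leq |D(C)| = \frac{c^{n-1}}{(n-1)!\kappa(G)}(1+o(1))$. The remaining task is to prove the matching lower bound, or equivalently to show $|D(C) \setminus R(C)| = o(c^{n-1})$.

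For the lower bound, I would fix a threshold $T$ depending only on $G$ (for instance, a sufficiently large multiple of $d_{\max}$) and split $D(C)$ into an interior $D_T(C) = \{C' \in D(C) : C'_v \geq T \text{ for all } v\}$ and a boundary $D(C) \setminus D_T(C)$. The boundary is small by an Ehrhart-type count: as in the proof of Theorem \ref{thm4}, $D(C)$ consists of the lattice points of the chip-firing coset inside an $(n-1)$-dimensional rational simplex of linear size $O(c)$, and the condition $C'_v < T$ for a particular $v$ cuts out an $(n-2)$-dimensional slab containing $O(c^{n-2})$ lattice points of the coset. Summing over the $n$ choices of $v$ and the $T$ allowed values of $C'_v$ gives $|D(C) \setminus D_T(C)| = O(c^{n-2})$.

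The heart of the proof is then the reachability lemma: every $C' \in D_T(C)$ is reachable from $C$. The strategy is to construct a legal firing order for the reduced firing vector $x$ taking $C$ to $C'$ by a greedy procedure --- at each step, fire any vertex $v$ with $x_v > 0$ which currently has at least $d_v$ chips, and recurse on the remaining firing vector $x - e_v$. To show that this procedure never gets stuck, I would argue that at a hypothetical stuck state $C + Ly$ the uniform lower bound $C'_v \geq T$ together with the identity $C' = (C + Ly) + L(x - y)$ forces some remaining vertex to in fact be firable, contradicting stuck-ness; making this precise requires $T$ to be large enough compared to $d_{\max}$ and the graph diameter. A plausible alternative is to invoke the Bj\"orner--Lov\'asz--Shor abelian-sandpile framework, which guarantees that if any legal firing sequence from $C$ reaching $C'$ exists at all, then the greedy sequence finds it.

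The main obstacle is precisely this reachability lemma. The naive idea of firing in an arbitrary order fails since intermediate configurations can violate nonnegativity; for example, on $C_3$ the permutations of $(c,0,0)$ are debt-reachable from $(c,0,0)$ but not reachable, as shown in Theorem \ref{thm2}. Care is therefore needed to identify an ordering that works for every generic $C' \in D_T(C)$. Once the reachability lemma is in place, combining it with the boundary count yields $|R(C)| \geq |D_T(C)| = |D(C)| - O(c^{n-2}) = \frac{c^{n-1}}{(n-1)!\kappa(G)}(1 - o(1))$, completing the proof.
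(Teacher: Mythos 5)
Your overall architecture is sound and genuinely different in shape from the paper's: you keep the original configuration fixed, discard an $O(c^{n-2})$ ``boundary'' of the debt-reachable simplex (this slab count is fine, exactly as in the proof of Theorem \ref{thm4}), and claim every remaining configuration is reachable; the paper instead first moves to an intermediate configuration with at least $d_i$ chips on each vertex and then invokes Lemma \ref{thm11}, which says that translating both endpoints by the degree vector upgrades debt-reachability to reachability, so that the reachable set from that intermediate configuration contains a translate of the full debt-reachable set of a configuration with $c-2E$ chips. Both routes give matching lower bounds of $(1-o(1))\,c^{n-1}/((n-1)!\kappa(G))$ and use the same upper bound.

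The genuine gap is the step you yourself flag: the reachability lemma for $D_T(C)$ is asserted, not proved, and your fallback does not close it --- the Bj\"orner--Lov\'asz--Shor/abelian-property appeal is circular, since it only says that \emph{if} a legal sequence from $C$ to $C'$ exists then greedy play finds it, whereas existence is exactly what is in question (on $C_3$, $(0,c,0)$ is debt-reachable from $(c,0,0)$ when $3\mid c$ but not reachable, cf. Theorem \ref{thm2}). The lemma is nevertheless true with $T=\max_i d_i$ (no dependence on the diameter), and the missing stuck-state argument is precisely the one the paper uses to prove Lemma \ref{thm11}: run the greedy procedure on the reduced firing vector $x$, and suppose it halts at a configuration $D$ with the set $S=\{v : x_v^{\mathrm{rem}}>0\}$ nonempty, so each $v\in S$ holds at most $d_v-1$ chips. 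The passage from $D$ to $C'$ fires only vertices of $S$, and each such firing changes the total number of chips on $S$ by $-d_v+\#\{\text{neighbors of } v \text{ in } S\}\le 0$; hence the total on $S$ at $C'$ is at most $\sum_{v\in S}(d_v-1)<|S|\,T$, contradicting $C'_v\ge T$ for all $v\in S$. With that paragraph inserted, your proof is complete and is a legitimate alternative to the paper's; without it, the central claim of the lower bound is unproven.
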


We first prove the following lemma.

\begin{lemma}\label{thm11}
If the configuration $(c_1, c_2, ..., c_n)$ is debt-reachable from the configuration $(c'_1, c'_2, ..., c'_n)$ then the configuration $(c_1 + d_1, c_2 + d_2, ..., c_n + d_n)$ is reachable from configuration $(c'_1 + d_1, c'_2 + d_2, ..., c'_n + d_n)$. (Recall that $d_i$ is the degree of vertex $i$).
\end{lemma}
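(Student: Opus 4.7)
Because $c'_i, c_i \geq 0$ for every $i$, the shifted configurations $C'+d$ and $C+d$ satisfy $(C'+d)_i \geq d_i$ and $(C+d)_i \geq d_i$ at every vertex. The difference $(C+d) - (C'+d) = C - C'$ lies in the image of $L$ by Theorem~\ref{triv2}, so one may choose a non-negative firing vector $y \in \mathbb{Z}_{\geq 0}^n$ such that firing vertex $i$ exactly $y_i$ times (in any order) transforms $C'+d$ into $C+d$ at the level of chip counts. I would realize these firings legally via a greedy schedule: starting from $C'+d$ with remaining firing vector $r := y$, at each step pick any vertex $i$ with $r_i > 0$ and current chip count $\geq d_i$, fire it legally, and decrement $r_i$. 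Since $\sum_i r_i$ strictly decreases at each step and is bounded below by $0$, the procedure terminates in finitely many steps; once $r = 0$ the configuration must be $C+d$.

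The main step, and the main obstacle, is to show the greedy procedure never gets stuck. Suppose for contradiction that some intermediate configuration $\widetilde{C}$ is reached with $r \neq 0$ yet $\widetilde{C}_i < d_i$ for every $i \in S := \{i : r_i > 0\}$. Completing the remaining firings $r$ from $\widetilde{C}$ would give $C+d$, so at every vertex $i$
\begin{equation*}
(C+d)_i \;=\; \widetilde{C}_i \;-\; r_i\, d_i \;+\; \sum_{j \sim i} r_j.
\end{equation*}
Summing over $i \in S$ and using $r_j = 0$ for $j \notin S$, the double sum $\sum_{i \in S}\sum_{j \sim i} r_j$ rearranges to $\sum_{j \in S} r_j \cdot |\{i \in S : i \sim j\}|$, which combined with $\sum_{i \in S} r_i d_i = \sum_{i \in S} r_i \cdot |\{j : j \sim i\}|$ yields
\begin{equation*}
\sum_{i \in S} r_i \cdot |\{\,j \notin S : j \sim i\,\}| \;=\; \sum_{i \in S} \widetilde{C}_i \;-\; \sum_{i \in S} (C+d)_i \;\leq\; \sum_{i \in S}(d_i - 1) \;-\; \sum_{i \in S} d_i \;=\; -|S|,
\end{equation*}
invoking $\widetilde{C}_i \leq d_i - 1$ for $i \in S$ and $(C+d)_i \geq d_i$ everywhere. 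The left-hand side is non-negative and the right-hand side is strictly negative, the desired contradiction.

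I expect the main difficulty to be appreciating that a naive round-by-round schedule (e.g., in round $r$ fire every vertex with $y_i \geq r$) need not produce a legal sequence, but that the flexibility of a greedy schedule, combined with the chip-count inequality above, suffices. The key structural ingredient is that both endpoints $C'+d$ and $C+d$ are coordinatewise $\geq d$, which simultaneously furnishes the lower bound $(C+d)_i \geq d_i$ and, under the hypothesised stuck configuration, the contradictory sum.
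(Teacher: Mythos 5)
Your proof is correct and follows essentially the same route as the paper's: a greedy schedule that fires any vertex with remaining firings whenever it can legally fire, together with a stuck-state contradiction obtained by summing chip counts over the support $S$ of the remaining firing vector and observing that firings supported on $S$ cannot increase the total number of chips on $S$. The only cosmetic difference is that you work directly in the shifted configurations $C'+d$, $C+d$, whereas the paper equivalently reformulates the claim as debt-reachability while never exceeding debt $d_i$ at vertex $i$; your explicit edge-counting inequality is just a quantitative version of the paper's argument.
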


\begin{proof}
Alternatively, the theorem statement is equivalent to the statement that if two configurations $C$ and $C'$ are debt-reachable from each other, then they are debt-reachable while going at most $d_i$ chips into debt on vertex $i$ at any point. We shall prove this version of the statement.

Let $\textbf{x} = (x_1, x_2, \dots, x_n)$ be the reduced firing vector for the transition from $C$ to $C'$. Consider the following algorithm to reach $C'$ from $C$ without going more than $d_i$ chips in debt on vertex $i$ at any point for all vertices $i$: If there exists an index $i$ such that $x_i > 0$ and such that vertex $i$ is not currently in debt, then fire vertex $i$ and decrease $x_i$ by $1$.

We claim that this algorithm terminates only when we reach the configuration $C'$ (i.e., when all the $x_i = 0$). To see why this is true, assume to the contrary that vertex $i$ is in debt for all $i$ where $x_i > 0$. Let $S$ the set of indices $i$ where $x_i > 0$. We claim now if that we fire all the vertices in $S$ the appropriate number of times (as given by the $x_i$), at least one of these vertices will remain in debt. To prove this, note that if we look at the total number of chips on all the vertices of $S$ (which is currently a negative number), this number can only decrease or remain the same when a vertex in $S$ is fired. Hence, after firing all the vertices in $S$ the appropriate number of times, the total number of chips on vertices of $S$ will be negative, and hence there is at least one vertex in $S$ which is still in debt. But this sequence of chip-firing moves was supposed to reach the (legal) configuration $C'$, which contains no vertex in debt. Therefore we have reached a contradiction, and the statement of the theorem holds, as desired.
\end{proof}

We can now prove Theorem \ref{thm12}.

\begin{proof}[\textbf{Proof of Theorem~\ref{thm12}}]
First, note that for sufficiently large $c$, for any configuration with $c$ chips, it must be possible to reach some configuration $C$ with at least $d_i$ vertices on each vertex $i$. Assume $C$ has the form $(c_1, c_2, \dots, c_n)$. By Lemma \ref{thm11}, the number of configurations reachable from $C$ is greater than or equal to the number of configurations debt-reachable from $C' = (c_1 - d_1, c_2 - d_2, \dots, c_n - d_n)$. This configuration contains $c - 2E$ chips, where $E$ is the number of edges of $G$ (since $\sum d_i = 2E$). By Theorem \ref{thm7_5}, the number of configurations debt-reachable from $C'$ is asympotically equal to $(c-2E)^{n-1}/((n-1)!\kappa(G))$ which in turn is asymptotically equal to $c^{n-1}/((n-1)!\kappa(G))$. Hence, the number of configurations reachable from this configuration with $c$ chips is asympotically greater than or equal to $c^{n-1}/((n-1)!\kappa(G))$. However, the  number of reachable configurations must also be less than or equal to the number of debt-reachable configurations, which is also asymptotically equal to $c^{n-1}/((n-1)!\kappa(G))$ by Theorem \ref{thm7_5}. Therefore the number of configurations reachable from this configuration is asymptotically equal to 

$$\dfrac{c^{n-1}}{(n-1)!\kappa(G)}$$

as $c$ approaches infinity, as desired.
\end{proof}

\section{Towards quasipolynomiality of the number of reachable configurations}\label{sec7}

In Theorem \ref{thm4} we show that the number of configurations debt-reachable from the configuration $(c,0,\dots,0)$ is a quasipolynomial in $c$. It is natural to then ask the same question about reachable configurations; does the number of reachable configurations exhibit any quasipolynomial behavior? For the case of $C_3$, Theorem \ref{thm3} answers this in the affirmative. What about for other graphs $G$?

For $C_4$, computer-aided computation of the number of reachable configurations from the configuration $(c,0,\dots,0)$ for $1 \leq c \leq 100$ suggests that this number is a quasipolynomial for $c\geq 4$. Similar computations for other graphs ($C_5$, $C_6$, $P_2$, $P_3$, $K_4$) suggests the following conjecture.

\begin{conjecture}\label{conj1}
The number of configurations reachable from $(c,0,0,\dots,0)$ on an arbitrary graph $G$ is a quasipolynomial in $c$ of degree $n-1$ for sufficiently large $c$.
\end{conjecture}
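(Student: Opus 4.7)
The plan is to mimic the proof of Theorem~\ref{thm4} by decomposing $R(c)$, the number of configurations reachable from $C_0 = (c,0,\ldots,0)$, into a sum of two pieces, each of which I would aim to show is a quasipolynomial in $c$. Call a configuration $C' = (c'_1, \ldots, c'_n)$ \emph{full} if $c'_i \geq d_i$ for every $i$, and write $R(c) = F(c) + N(c)$, where $F(c)$ counts full reachable configurations and $N(c)$ counts non-full reachable configurations. The goal is to show $F$ is a quasipolynomial of degree $n-1$ and $N$ is a quasipolynomial of degree at most $n-1$, so that $R$ is a quasipolynomial of degree $n-1$ for large $c$.

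For $F(c)$, I would first show that once $c$ is large enough, every full configuration debt-reachable from $C_0$ is actually reachable from $C_0$. The idea: for $c$ sufficiently large, one can exhibit a specific full configuration $C^\dagger$ reachable from $C_0$ by a direct firing sequence (for instance by repeatedly firing vertex $1$ and then propagating outward along the graph). Given any other full debt-reachable configuration $C'$, it lies in the same debt-block as $C^\dagger$, so $C' - d$ and $C^\dagger - d$ are non-debt and debt-reachable from each other (with $d = (d_1, \ldots, d_n)$); by Lemma~\ref{thm11}, $C'$ is reachable from $C^\dagger$, hence from $C_0$. Thus $F(c)$ equals the number of full debt-reachable configurations, which counts the lattice points $\mathbf{x} \in \mathbb{Z}^{n-1}$ (with $x_n = 0$) satisfying $L\mathbf{x} \geq d - c\, e_1$. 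This is a parametric rational polytope whose defining inequalities have constants linear in $c$, so the standard Ehrhart-type argument of Theorem~\ref{thm4} (combined with the matrix-tree bound on denominators) yields that $F(c)$ is a quasipolynomial of degree $n-1$ in $c$, with period dividing $\kappa(G)$.

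For $N(c)$, I would partition the non-full reachable configurations by the subset $S \subseteq V$ of under-threshold vertices (those with $c'_i < d_i$) together with the specific values $(c'_i)_{i \in S}$, of which there are only finitely many possibilities independent of $c$. For each fixed pair $(S, (c'_i)_{i \in S})$, the remaining data is a full configuration on $V \setminus S$ with a prescribed total chip count (equal to $c$ minus a fixed constant) and with the requirement that the whole configuration be reachable from $C_0$. I would try to show this restricted count is again a lattice-point enumeration in a rational polytope whose constants depend linearly on $c$, perhaps by developing a variant of Lemma~\ref{thm11} tailored to fixing the chip counts on $S$, so that each summand, and hence $N(c)$, is a quasipolynomial of degree at most $n-1$.

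The main obstacle is precisely this last step: reachability is a non-local condition on firing sequences, and I do not yet see a clean characterization of which firing vectors produce a non-full reachable configuration. A natural attack would use the Bj\"orner-Lov\'asz-Shor theory of legal firing sequences, which identifies the set of realizable firing vectors via confluence of ``greedy'' firings from $C_0$; one would hope to describe this set as a finite union of translated rational polyhedra, whose intersection with the debt-reachable simplex $cP$ from Theorem~\ref{thm4} would then have a quasipolynomial lattice-point count by Ehrhart theory. As a first sanity check, I would work out the small cases $C_4, C_5, C_6, K_4$ cited by the author to identify the pattern of non-full contributions, and then try to extract from them the correct general structural statement.
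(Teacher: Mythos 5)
You are attempting to prove Conjecture \ref{conj1}, which the paper itself does not prove and explicitly states it is unable to prove; your proposal, by your own admission, also does not prove it. The decomposition $R(c)=F(c)+N(c)$ puts all of the genuine difficulty into $N(c)$, the count of non-full reachable configurations, and for that piece you offer only a hoped-for strategy. This is not a minor technicality: for full configurations reachability coincides with debt-reachability (your argument via Lemma \ref{thm11} is essentially the same device the paper uses inside the proof of Theorem \ref{thm12}), so $F(c)$ reduces to a lattice-point count; but for configurations with some $c'_i<d_i$, reachability is a statement about the existence of legal firing sequences and is not captured by membership in the Laplacian lattice intersected with a polyhedron. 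The paper's own partial progress (Theorem \ref{thm13} together with Lemmas \ref{lem1} and \ref{lem2}) is aimed precisely at such a combinatorial description of the reachable set near the boundary, and even with these tools the author reports no proof. Your appeal to Bj\"orner--Lov\'asz--Shor confluence is a reasonable direction, but you give no argument that the set of realizable firing vectors from $(c,0,\dots,0)$ is a finite union of translated rational polyhedra with constants affine in $c$; without that, the claim that $N(c)$ is eventually a quasipolynomial is exactly as open as the conjecture itself.

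Two further points on the half you do argue. First, the region $\{\mathbf{x}\in\mathbb{Z}^{n-1}: L\mathbf{x}\geq d-c\,e_1\}$ is not a dilate $cP$ of a fixed polytope (each facet is shifted inward by a constant independent of $c$), so the Ehrhart theorem as used in Theorem \ref{thm4} does not apply verbatim; you need eventual quasipolynomiality for parametric polytopes whose right-hand sides are affine in $c$, or an equivalent reformulation such as counting the points of a fixed coset of $CFL'$ inside the dilated chip simplex. This is true and standard, but it must be stated and justified, and it is the reason the conclusion for $F(c)$ holds only for sufficiently large $c$. Second, the existence of a full configuration reachable from $(c,0,\dots,0)$ for large $c$ also needs a proof (the paper asserts the analogous fact in Theorem \ref{thm12} without one). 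Even with these repairs, your plan establishes at best that $F(c)$ is eventually quasipolynomial; the non-full contribution $N(c)$ remains unhandled, so the conjecture remains unproved.
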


While we are unable to prove this conjecture, we prove the following characterization theorem for configurations which are debt-reachable but not reachable

\begin{theorem}\label{thm13}
Let $C'$ be a configuration which is debt-reachable but not reachable from configuration $C$. Let $x = (x_1, x_2, \dots, x_n)$ be the reduced firing vector between $C$ and $C'$. Now, let $D$ be a reachable configuration from $C$ with reduced firing vector $y = (y_1, y_2, \dots, y_n)$. Then there exists a unique choice of $D$ with the following two properties
\begin{enumerate}[i.]
\item
For all $i$, $y_i \leq x_i$
\item
If $(x_i - y_i) > 0$, then vertex $i$ cannot fire in configuration $D$.
\end{enumerate}
\end{theorem}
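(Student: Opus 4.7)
The plan is to construct $y$ via a greedy maximal legal firing constrained by $x$, and then use an exchange argument based on the abelian property of chip-firing to establish uniqueness.

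\textbf{Existence.} Starting from $C$ and $y = (0, 0, \ldots, 0)$, repeat the following: while there exists an index $i$ with $y_i < x_i$ such that vertex $i$ can fire in the current configuration $C + Ly$, perform that legal firing and replace $y$ by $y + e_i$. Since $y$ is coordinatewise bounded above by $x$ and strictly increases in one coordinate per step, the procedure terminates at a configuration $D = C + Ly$ reached from $C$ by a legal firing sequence. By construction $y \leq x$, and the stopping criterion says exactly that every $i$ with $x_i - y_i > 0$ cannot fire in $D$; this gives both (i) and (ii). Because $x$ is reduced, some coordinate $x_j$ vanishes, forcing $y_j = 0$, so $y$ is itself a reduced firing vector.

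\textbf{Uniqueness.} Suppose $y$ and $y'$ both satisfy (i) and (ii) and $y \neq y'$. By symmetry we may assume the set $T = \{i : y'_i > y_i\}$ is nonempty. Fix any legal firing sequence $\sigma$ from $C$ realizing $y'$, and let $t^*$ be the earliest step of $\sigma$ at which some vertex $i \in T$ is fired for the $(y_i+1)$-th time; such a step exists because each $i \in T$ is fired $y'_i > y_i$ times in $\sigma$. Writing $w$ for the firing vector accumulated just before step $t^*$, the choice of $t^*$ gives $w_k \leq y_k$ for every $k$: for $k \in T$, no vertex of $T$ has yet been fired past its $y$-quota, while for $k \notin T$ we have $w_k \leq y'_k \leq y_k$. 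In particular $w_i = y_i$, so $(y - w)_i = 0$ and
$$
[L(y - w)]_i = -d_i(y - w)_i + \sum_{j \sim i}(y - w)_j = \sum_{j \sim i}(y - w)_j \geq 0.
$$
Since step $t^*$ of $\sigma$ is legal, $(C + Lw)_i \geq d_i$, and therefore $(C + Ly)_i \geq (C + Lw)_i \geq d_i$. Thus vertex $i$ can fire in $D = C + Ly$ even though $y_i < x_i$, contradicting (ii) for $y$. Hence $y = y'$, and so $D$ is unique.

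\textbf{Main obstacle.} The delicate point is choosing the overshoot step $t^*$ carefully so that the intermediate firing vector $w$ is dominated by $y$ \emph{and} the overshot coordinate $i$ satisfies $w_i = y_i$ exactly. That equality kills the sole negative term $-d_i(y-w)_i$ in $[L(y-w)]_i$ and is what allows legality at $C + Lw$ to propagate to $C + Ly$; a weaker choice of $t^*$ would leave a potentially large negative diagonal contribution that could destroy the monotonicity of the chip count at $i$.
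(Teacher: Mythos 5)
Your proof is correct and follows essentially the same route as the paper's: existence by greedily firing vertices within their $x$-quota until stuck, and uniqueness by locating the first step at which a sequence realizing the larger vector overshoots the other vector's quota, then using the fact that firing other vertices only adds chips at the overshot vertex to conclude it is still fireable in the other configuration, contradicting property (ii). The only difference is cosmetic: you express the monotonicity step through the Laplacian identity $[L(y-w)]_i=\sum_{j\sim i}(y_j-w_j)\geq 0$ (with the diagonal term killed by $w_i=y_i$), where the paper argues the same point verbally.
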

\begin{proof}
Clearly there exists at least one such configuration $D$, since we can just keep on firing vertices $v$ with $x_v > 0$ until there are no longer any legal firing moves remaining. Thus, we need only show uniqueness. Assume there are two distinct such configurations, $D_1$ and $D_2$. Consider the vertices in $D_1$ that were fired more times than the vertices in $D_2$ (without loss of generality these exist; otherwise, swap $D_1$ and $D_2$). We claim that in configuration $D_2$, we can fire at least one of these vertices. 

To prove this, consider the first time in the transition from $C$ to $D_1$ when a vertex is fired for the $(a+1)$-th time when this vertex is only fired a total of $a$ times when reaching $D_2$. We claim that this vertex is fireable in configuration $D_2$. This follows since the only operation that can reduce the number of chips on this vertex is firing this vertex, so therefore, the number of chips on this vertex has to be at least as large as the number of chips on this vertex when it was fired for the $(a+1)$-th time when reaching $D_1$ (since all prior moves when reaching $D_1$ have also been made at some point in reaching $D_2$). Then, since this vertex was fireable in configuration $D_1$, this vertex must be fireable in configuration $D_2$, as desired. 
\end{proof}

In essence, this theorem states that every configuration which is debt-reachable but not reachable corresponds uniquely to a ``most fired'' reachable configuration and a firing vector entirely supported on the non-fireable vertices of this configuration. We hope that this characterization theorem could be used to prove that the number of configurations that are debt-reachable but not reachable is a quasipolynomial in $c$; together with Theorem \ref{thm4}, this would imply Conjecture \ref{conj1}. Note the similarities between this line of reasoning and the line of reasoning used in Theorems \ref{thm2} and \ref{thm3}.

In addition to this characterization theorem, we also prove two lemmas exploring the relationship between debt-reachability and reachability. The first lemma shows that it does not matter what order we fire vertices in when trying to reach a reachable configuration.

\begin{lemma}\label{lem1}
Assume configuration $C'$ is reachable from configuration $C = (c_1, c_2, \dots, c_n)$, and this transition has reduced firing vector $(x_1, x_2, \dots, x_n)$. Then, if $x_i > 0$ and $c_i \geq d_i$, let $C''$ be the configuration obtained by firing vertex $i$. Then configuration $C'$ is reachable from $C''$.
\end{lemma}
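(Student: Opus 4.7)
The plan is to take any legal firing sequence realizing the transition $C \to C'$, identify the first occurrence of vertex $i$ in that sequence, and swap that firing to the very front. I will show that the resulting sequence, with the remaining firings left in their original order, is still legal and ends at $C'$ when started from $C''$.

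More concretely, let $\sigma = (v_{j_1}, \dots, v_{j_k})$ be a legal firing sequence from $C$ to $C'$, with intermediate configurations $C = C_0, C_1, \dots, C_k = C'$. Since the reduced firing vector is $x$ and $x_i > 0$, vertex $i$ appears in $\sigma$; let $t$ be the smallest index with $v_{j_t} = i$. Define $\sigma' = (v_{j_1}, \dots, v_{j_{t-1}}, v_{j_{t+1}}, \dots, v_{j_k})$ and let $D_s$ denote the configuration reached after $s$ steps of $\sigma'$ starting from $C''$. Writing $\Delta = C'' - C$, the commutativity of chip-firing (noted in Section~\ref{definitions}) shows by a direct induction that $D_s = C_s + \Delta$ for $0 \leq s \leq t-1$ and $D_s = C_{s+1}$ for $t-1 \leq s \leq k-1$; the two formulas agree when $s = t-1$ because $C_t = C_{t-1} + \Delta$. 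In particular $D_{k-1} = C_k = C'$, so $\sigma'$ ends at the correct configuration.

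It then remains to verify that each step of $\sigma'$ is legal. Steps with $s \geq t-1$ fire a vertex in some $C_r$ with $r \geq t$, where that same firing was already legal in the original sequence $\sigma$, so no further check is needed. For steps with $s < t-1$, we fire the vertex $v_{j_{s+1}}$, which by minimality of $t$ satisfies $v_{j_{s+1}} \neq i$; the configuration at that moment is $D_s = C_s + \Delta$. But $\Delta$ equals $-d_i$ at vertex $i$, $+1$ at each neighbor of $i$, and $0$ elsewhere, so $\Delta_v \geq 0$ for every $v \neq i$. Hence the chip count at $v_{j_{s+1}}$ in $D_s$ is at least its chip count in $C_s$, which was already at least $d_{v_{j_{s+1}}}$; the firing is legal.

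The main obstacle is really just the bookkeeping when swapping the $t$-th firing to the front. The key insight is that firing $i$ once at the start only increases (or leaves unchanged) the chip count at every vertex other than $i$, so any previously legal firing of a vertex other than $i$ remains legal after the swap, and the conclusion follows essentially from the abelian property of chip-firing.
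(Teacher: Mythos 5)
Your proposal is correct and follows essentially the same argument as the paper: move the first occurrence of vertex $i$ to the front of the firing sequence, observe that firing $i$ only increases chip counts at other vertices so the earlier moves (none of which fire $i$) stay legal, and use the abelian property to conclude the suffix is unchanged and the sequence still ends at $C'$. Your explicit bookkeeping via $D_s = C_s + \Delta$ is just a more detailed rendering of the paper's reasoning.
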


\begin{proof}
Since configuration $C'$ is reachable from configuration $C$, there is a sequence of legal chip-firing moves to get from $C$ to $C'$. Let these moves (in order) be firing the vertices $v_1, v_2, \dots, v_r$. Let $v_j$ be the first occurrence of vertex $i$ in this sequence (we know that there must be an occurrence of vertex $i$ in this sequence because $x_i > 0$ in the reduced firing vector). We claim that the sequence $v_j, v_1, v_2, \dots, v_{j-1}, v_{j+1}, \dots, v_r$ is also a sequence of legal chip-firing moves (and since it is a permuation of the previous sequence, it also sends $C$ to $C'$). Note that since firing vertex $v_j = i$ can only increase the number of chips on other vertices, and since none of vertices $v_1$ through $v_{j-1}$ are equal to vertex $i$, the sequence of chip-firing moves $v_j, v_1, v_2, \dots, v_{j-1}$ must be a sequence of legal chip-firing moves (the first move is legal by the conditions of the theorem statement). Then, because $v_j, v_1, v_2, \dots, v_{j-1}$ is a permutation of $v_1, v_2, \dots, v_j$, after $j$ operations the configuration is the same as it was originally after $j$ operations. Since the remaining moves $v_{j+1}, \dots, v_r$ have not changed, they therefore still remain legal.
\end{proof}

The second lemma (along with its corollary) shows that we never need to fire every vertex to reach a configuration. As a consequence, if we can reach a configuration, we can reach it by firing each vertex the number of times given in the reduced firing vector.

\begin{lemma}\label{lem2}
If configuration $C'$ is reachable from configuration $C = (c_1, c_2, \dots, c_n)$ through a sequence of moves that has firing vector $(x_1, x_2, \dots, x_n)$ where all $x_i > 0$, then it is also reachable from configuration $C$ through a sequence of moves that has firing vector $(x_1-1, x_2-1, \dots, x_n-1)$.
\end{lemma}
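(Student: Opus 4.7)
The plan is to iteratively apply Lemma \ref{lem1} to ``pull out'' one firing of each vertex from the front of a legal sequence realizing the transition from $C$ to $C'$. Since firing every vertex once leaves the configuration unchanged by Theorem \ref{triv3}, once we have extracted $n$ such firings (one per vertex, in some order), the resulting prefix returns us to $C$, and the remainder of the sequence then gives a legal path from $C$ to $C'$ with firing vector $(x_1 - 1, x_2 - 1, \ldots, x_n - 1)$.

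More concretely, I would construct distinct vertices $u_1, u_2, \ldots, u_n$ inductively. Having defined $u_1, \ldots, u_k$ and the configuration $C^{(k)} = C + L(e_{u_1} + \cdots + e_{u_k})$, together with a legal sequence from $C^{(k)}$ to $C'$ whose firing vector is $(x_i - \mathbf{1}[i \in T_k])_{i=1}^n$ with $T_k = \{u_1, \ldots, u_k\}$, I would choose $u_{k+1}$ to be the first vertex outside $T_k$ that is fired in this sequence. Such a vertex exists because every $x_i \geq 1$, so each vertex outside $T_k$ still has positive remaining firing count and must appear at least once in any legal sequence from $C^{(k)}$ to $C'$.

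The key step, and what I expect to be the main obstacle, is to verify that $u_{k+1}$ is actually fireable at $C^{(k)}$ itself, not merely at the point in the chosen sequence where it first fires. This should follow from the observation that every firing prior to the first firing of $u_{k+1}$ in the sequence is of some vertex in $T_k$, and each such firing changes the chip count on $u_{k+1}\notin T_k$ by $L_{u_{k+1},j} = -\mathbf{1}[j \sim u_{k+1}] \leq 0$. Hence the chip count on $u_{k+1}$ at $C^{(k)}$ is at least its chip count at the moment $u_{k+1}$ is first fired in the sequence, which is $\geq d_{u_{k+1}}$ by legality. Lemma \ref{lem1} then applies and produces a legal sequence from $C^{(k+1)} = C^{(k)} + L e_{u_{k+1}}$ to $C'$ with the appropriate reduced firing vector, completing the inductive step. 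After $n$ iterations we have $T_n = \{1, \ldots, n\}$, and Theorem \ref{triv3} gives $C^{(n)} = C + L(e_1 + \cdots + e_n) = C$, so the remaining legal sequence from $C^{(n)} = C$ to $C'$ has firing vector $(x_1 - 1, \ldots, x_n - 1)$, as required.
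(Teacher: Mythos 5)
Your key inductive step fails, and the failure traces to a sign error. Firing a vertex $j$ adjacent to $u_{k+1}$ \emph{increases} the chip count on $u_{k+1}$ by one (each neighbor of the fired vertex gains a chip; compare the explicit computation in the proof of Theorem \ref{thm1}); it does not change it by $-\mathbf{1}[j \sim u_{k+1}]$. Consequently your inequality runs the wrong way: the chip count on $u_{k+1}$ at $C^{(k)}$ is at \emph{most} its count at the moment it is first fired in the sequence, so legality of that later firing says nothing about fireability at $C^{(k)}$. In fact the whole ``identity prefix'' strategy cannot work, not merely this justification of it. Take $G = P_3$ with vertices $1$--$2$--$3$, $C = (4,0,0)$, and the legal sequence fire $1$, fire $1$, fire $2$, fire $3$, ending at $C' = (3,1,0)$ with firing vector $(2,1,1)$. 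After extracting the first firing ($u_1 = 1$) you sit at $C^{(1)} = (3,1,0)$, where neither vertex $2$ (one chip, degree two) nor vertex $3$ (zero chips) can fire; indeed no legal sequence from $(4,0,0)$ fires three distinct vertices as its first three moves, even though the lemma's conclusion holds here (fire vertex $1$ once). The redundant firings you are trying to pull to the front may be enabled only by chips accumulated during the process, so they need not be executable at the start.

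The paper avoids this by deleting one firing of each vertex from \emph{late} in the sequence rather than moving them to the front: after relabeling so that vertex $n$ is the vertex whose last occurrence comes earliest, it deletes that last occurrence of $n$ and, for each other vertex $i$, the first occurrence of $i$ after that point. Legality of the truncated sequence is then checked directly: for any remaining firing of a vertex $i$, the deleted moves occurring before it form a repeat-free set $S$ of vertices that contains $i$ itself, and since firing all $n$ vertices leaves every chip count unchanged, firing a subset containing $i$ cannot increase the count on $i$; hence deleting those firings cannot decrease it, and the move stays legal. Deleting from the back only ever \emph{raises} the chip counts seen by the surviving moves, which is exactly the monotonicity your front-loading argument lacks. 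To salvage your approach you would need that one can always legally fire each vertex once before doing anything else, and the example above shows this is false.
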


\begin{proof}
Again, let the sequence of chip-firing moves to get from $C$ to $C'$ be firing the vertices $v_1, v_2, ..., v_r$, in that order. Since $x_i>0$ for all $i$, we know that each of the vertices $1$ through $n$ appears at least once in this sequence. Without loss of generality, let the last occurence of vertex $i$ occur after the last occurrence of vertex $j$ iff $i<j$ (we can attain this by simply relabeling the vertices). Now, let $v_{k_n}$ be the last occurence of the vertex $n$, and let $v_{k_i}$ be the first occurence of the vertex $i$ after $v_{k_n}$ for $1\leq i \leq n-1$. For instance, if our sequence was

$$1, 4, 4, 2, 2, 3, 1, 2, 1, 1$$

then $k_4 = 3$, $k_1 = 7$, $k_2 = 4$, and $k_3 = 6$. We claim that the sequence obtained by removing $v_{k_i}$ for all $1\leq i \leq n$ is still a legal sequence, proving the lemma. 

To show this, assume to the contrary that after we perform this operation, vertex $v_p = i$ is in debt. Let the set $S$ consist of the vertices that occur before vertex $v_p$ in the original firing sequence that were removed. By the above construction, $S$ must include vertex $i$, since vertex $v_{k_i}$ must occur before vertex $i$. Also, since we remove each vertex just once from the sequence, $S$ contains no repeats. 

Now firing vertex $j$ only decreases the number of vertices on vertex $i$ if $j=i$. Since firing all $n$ vertices does not change the number of chips on vertex $i$, firing a subset of the $n$ vertices which contains vertex $i$ cannot increase the number of chips on vertex $i$. Since $S$ is such a set, firing all the vertices in $S$ cannot increase the number of chips on vertex $v_p$. But this means that vertex $v_p$ must have been in debt in the original sequence as well, which is impossible. Therefore our new sequence is a legal chip-firing sequence, as desired.
\end{proof}

\begin{corollary}
Assume configuration $C'$ is reachable from configuration $C = (c_1, c_2, \dots, c_n)$, and let $x = (x_1, x_2, \dots, x_n)$ be the reduced firing vector for the corresponding transition. Then we can reach configuration $C'$ from configuration $C$ by firing vertex $i$ exactly $x_i$ times for each $1 \leq i \leq n$.
\end{corollary}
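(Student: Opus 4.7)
The plan is to start from any legal firing sequence that realizes $C \to C'$, record its firing vector $y$, and then apply Lemma \ref{lem2} repeatedly to peel off copies of $(1,1,\dots,1)$ from $y$ until we land on the reduced firing vector $x$.

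First I would explain why every legal firing vector for the transition $C \to C'$ has the form $y = x + k(1,1,\dots,1)$ for some nonnegative integer $k$. Any two integer solutions to $L\mathbf{v} = C' - C$ differ by an element of $\ker L$, which (since $G$ is connected) is spanned over $\mathbb{Q}$ by $(1,1,\dots,1)^t$; thus $y - x = \lambda(1,1,\dots,1)$ for some $\lambda \in \mathbb{Q}$. Because both $x$ and $y$ are integer vectors, $\lambda$ is an integer, and because $y$ has nonnegative entries while $x$ satisfies $\min_i x_i = 0$, $\lambda$ must be a nonnegative integer $k$.

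Next I would induct on $k$. The base case $k = 0$ is immediate: $y = x$, and the given legal sequence already fires vertex $i$ exactly $x_i$ times. For the inductive step, suppose $k \geq 1$, so $y_i \geq 1$ for every $i$. Lemma \ref{lem2} then produces a legal sequence from $C$ to $C'$ whose firing vector is
\[
y - (1,1,\dots,1) = x + (k-1)(1,1,\dots,1),
\]
and the induction hypothesis applied to this new sequence completes the proof.

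The only delicate point is really the bookkeeping that identifies $y - x$ with a nonnegative multiple of the all-ones vector, but since $\ker L$ is one-dimensional this is transparent. All the genuine work has already been done in Lemma \ref{lem2}, which supplies the crucial step of dropping one fire at each vertex; the corollary is just the iteration of this step down to the reduced firing vector.
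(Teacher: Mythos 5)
Your proposal is correct and follows exactly the route the paper intends: observe that any legal firing vector for $C \to C'$ equals $x + k(1,1,\dots,1)$ with $k \geq 0$ (using that $\ker L$ is spanned by the all-ones vector and $\min_i x_i = 0$), then strip off one copy of $(1,1,\dots,1)$ at a time via Lemma \ref{lem2}. The paper leaves this iteration implicit, and your induction on $k$ fills it in correctly.
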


\section{Remarks and open problems}
The most relevant open problem is Conjecture \ref{conj1}. The author of this paper believes that there should be a proof of this conjecture using Theorem \ref{thm13} and the two Lemmas \ref{lem1} and \ref{lem2}. Efforts along these lines have not been successful thus far, however.

If Conjecture \ref{conj1} is answered in the affirmative, there are a host of related questions that can be asked. For instance, how large must $c$ be for the number of reachable configurations to exhibit quasipolynomial behavior? Preliminary investigations suggest that $c\geq 2E-n$ should suffice, but it is unclear if a better bound can be found. Similarly, what properties does the period of the resulting quasipolynomial satisfy? Note that for $C_5$ it appears that the period of the associated quasipolynomial is $10$, so the period likely does not have to divide $\kappa(G)$ as in Theorem \ref{thm4}. We can also ask what these quasipolynomials look like in the case of cycle graphs $C_n$, extending the result of Theorem \ref{thm10}.

Even in analyzing the number of debt-reachable configurations has met with more success, there are several questions that remain to be answered. For instance, what are the exact periods in Theorem \ref{thm4} (we know only that they divide $\kappa(G)$)? For what other graphs (aside from $C_n$) can we find particularly simple forms for the number of debt-reachable configurations? Is there a particularly simple combinatorial interpretation of the numbers $b_i$ for small values of $i$ (for instance, $b_1$)?

Finally, it would be interesting to extend these methods to the case of particular classes of infinite graphs, for instance an infinite path or an infinite square lattice. Such research could perhaps be relevant to various chip-firing and sandpile model problems in statistical physics. 

\section*{Acknowledgements}

This work was performed in part during MIT's SPUR (Summer Program for Undergraduate Research) in the summer of 2010. The author would also like to thank Dr. Gregg Musiker and Hoda Bidkhori for introducing him to this problem.

\end{document}